\newcommand{\norm}[1]{\left\Vert#1\right\Vert}
\newcommand{\brkt}[1]{\left(#1\right)}
\newcommand{\abs}[1]{\left|#1\right|}
\newtheorem{thm}{Theorem}[section]
\newtheorem{cor}[thm]{Corollary}
\newtheorem{ex}[thm]{Example}
\newtheorem{lem}[thm]{Lemma}
\newtheorem{prop}[thm]{Proposition}
\newtheorem{rem}[thm]{Remark}
\newtheorem{defi}[thm]{Definition}
\renewcommand\subsubsection{\@secnumfont}{\bfseries}%
\renewcommand\subsubsection{\@startsection{subsubsection}{3}
   \z@{.5\linespacing\@plus.7\linespacing}{-.5em}%
   {\normalfont\bfseries}}
\newcommand{\R}{\mathbb{R}}
\newcommand{\Z}{\mathbb{Z}}
\newcommand{\N}{\mathbb{N}}
\newcommand{\SW}{\mathcal{S}}
\newcommand{\BMO}{\mathrm{BMO}}
\newcommand{\bmo}{\mathrm{bmo}}
\newcommand{\ddd}{\,\text{\rm{\mbox{\dj}}}}
\newcommand{\dd}{\mathrm{d}}
\newcommand{\m}{\mathtt{m}}
\title[] {Endpoint estimates for bilinear pseudodifferential operators with symbol in $BS_{1,1}^m$}
\author[S. Arias]{Sergi Arias}
\address{Department of Mathematics, Stockholm University, SE-106 91 Stockholm, Sweden}
\email{arias@math.su.se}
\author[S.~Rodr\'iguez-L\'opez]{Salvador Rodr\'iguez-L\'opez}
\address{Department of Mathematics, Stockholm University, SE-106 91 Stockholm, Sweden}
\email{s.rodriguez-lopez@math.su.se}
\thanks{The authors are partially supported by the Spanish Government grant PID2020-113048GB-I00}
\subjclass[2020]{Primary 47G30;  Secondary 35A23, 42B20,  42B35}
\keywords{Bilinear pseudodifferential operators, local bmo, Triebel-Lizorkin spaces of generalised smoothness, Kato-Ponce inequalities}
\begin{document}

\begin{abstract}
    In this paper we establish some endpoint estimates for bilinear pseudodifferential operators with symbol in the class $BS_{1,1}^m$, involving the space of functions with local bounded mean oscillation $\bmo(\R^n)$. As a consequence we also obtain an endpoint estimate of Kato-Ponce type.
\end{abstract}
\maketitle

\section{Introduction}
The present paper is devoted to the study of endpoint estimates for bilinear pseudodifferential operators of the form
\[
    T_\sigma(f,g)(x)=\iint\sigma(x,\xi,\eta)\widehat{f}(\xi)\widehat{g}(\eta)e^{ix(\xi+\eta)}\ddd \xi\ddd\eta,\quad x\in\R^n,\quad f,g\in\SW(\R^n),
\]
where the symbol $\sigma(x,\xi,\eta)$  belongs to the class $BS_{1,1}^m=BS_{1,1}^m(\R^n)$ with $m\in\R$, that is, $\sigma$ is a smooth function in $\R^{3n}$ for which
\[
    \abs{\partial^\alpha_x\partial^\beta_\xi\partial^\gamma_\eta\sigma(x,\xi,\eta)}\leq C_{\alpha,\beta,\gamma}(1+\abs{\xi}+\abs{\eta})^{m+\abs{\alpha}-\abs{\beta}-\abs{\gamma}}
\]
for all $(x,\xi,\eta)\in\R^{3n}$, all multi-indices $\alpha,\beta,\gamma\in\N^n$ and some $C_{\alpha,\beta,\gamma}>0$. Here $\ddd \xi$ denotes the normalised Lebesgue measure $\ddd \xi=(2\pi)^{-n}\dd\xi$.

For symbols in the class $BS^m_{1,1}$ and $N\in\N$ we shall use the notation
\begin{align*}
    \norm{\sigma}_{BS^m_{1,1;N}}:=\max_{\abs{\alpha},\abs{\beta},\abs{\gamma}\leq N}\left(\sup_{x,\xi,\eta\in\R^n}(1+\abs{\xi}+\abs{\eta})^{-(m+\abs{\alpha}-\abs{\beta}-\abs{\gamma})}\abs{\partial^\alpha_x\partial^\beta_\xi\partial^\gamma_\eta\sigma(x,\xi,\eta)}\right).
\end{align*}
\'A. B\'enyi and R.H. Torres showed in \cite{ben-tor}*{Theorem 2} that for all symbols $\sigma\in BS^0_{1,1}$ the associated pseudodifferential operator $T_\sigma$ satisfies the estimate
\begin{equation}\label{eqLps}
    \norm{T_\sigma(f,g)}_{L^r_s(\R^n)}\lesssim\norm{\sigma}_{BS_{1,1;N}^0}\left(\norm{f}_{L^p_s(\R^n)}\norm{g}_{L^q(\R^n)}+\norm{f}_{L^p(\R^n)}\norm{g}_{L^q_s(\R^n)}\right)
\end{equation}
for $N$ large enough, where $s>0$, $1<p,q,r<\infty$ and $1/p+1/q=1/r$. Here $L^p_s(\R^n)$ refers to Sobolev spaces (see Section \ref{prel} for the precise definition of the function spaces appearing in the paper).

\'A. B\'enyi, A. Nahmod and R. H. Torres obtained a similar estimate for $\sigma\in BS_{1,1}^m$ with $m\geq 0$ in \cite{ben-nah-tor}*{Theorem 2.1}, while V. Naibo \cite{Naibo}*{Section 1.1.1} studied the boundedness of $T_\sigma$ for $\sigma\in BS_{1,1}^0$ on Triebel-Lizorklin and Besov spaces.

Later, K. Koezuka and N. Tomita \cite{koe-tom} generalised the estimate in \eqref{eqLps} for symbols in $BS^m_{1,1}$, $m\in\R$, considering Triebel-Lizorkin and local Hardy spaces. More precisely, the authors show in \cite{koe-tom}*{Theorem 1.1} the existence of a positive integer $N$ for which
\begin{equation}\label{eq_ko_to}
    \norm{T_\sigma(f,g)}_{F_{p,q}^s(\R^n)}\lesssim\norm{\sigma}_{BS_{1,1;N}^m}\left(\norm{f}_{F_{p_1,q}^{s+m}(\R^n)}\norm{g}_{h^{p_2}(\R^n)}+\norm{f}_{h^{\tilde{p}_1}(\R^n)}\norm{g}_{F_{\tilde{p}_2,q}^{s+m}(\R^n)}\right)
\end{equation}
with $0<p_1,p_2,\tilde{p}_1,\tilde{p}_2,p<\infty$, $1/p_1+1/p_2=1/\tilde{p}_1+1/\tilde{p}_2=1/p$, $0<q\leq\infty$ and
\[
    s>\left\{
        \begin{array}{lll}
        \tau_{p,q} & \mathrm{if} & 0<q<\infty,\\
        \tau_{p,q}+n & \mathrm{if} & q=\infty, 0<p<1,\\
        n/p & \mathrm{if} & q=\infty, 1\leq p\leq\infty,
        \end{array} 
    \right.
\]
where 
\[
    \tau_{p,q}:=n\brkt{\frac{1}{\min(1,p,q)}-1}.
\]
In addition, it is also shown that \eqref{eq_ko_to} is satisfied if $p_2=\infty$ or $\tilde{p}_1=\infty$, with $L^\infty(\R^n)$ instead of the  local Hardy space.

Recently, B. J. Park \cite{Park} obtained estimates for multilinear pseudodifferential operators with symbol in the multilinear anlogue of $BS^m_{1,1}$, improving the result of K. Koezuka and N. Tomita. More precisely, it is shown in \cite{Park}*{Theorem 1.5} that \eqref{eq_ko_to} holds for $s>\tau_{p,q}$ and including the cases $p_1=\infty$ and $\tilde{p}_2=\infty$.

One of the goals of this paper is to obtain an inequality analogous to \eqref{eq_ko_to} at the endpoint case $p_2=\tilde{p}_1=\infty$, replacing $L^\infty(\R^n)$ by the larger space $\bmo(\R^n)$, the space of functions with local bounded mean oscillation. In Corollary \ref{cor_BS_dia} we present such estimate where the target space is then of Triebel-Lizorkin type with logarithmic smoothness (see Definition \ref{GSTL}).

The boundedness properties of bilinear pseudodifferential operators with symbols in $BS_{1,1}^m$, can be established by studying those in a subclass of symbols of the form
\begin{equation}\label{main_sym}
    \sigma(x,\xi,\eta)=\sum_{j=0}^\infty \m_j(x)\psi_j(\xi)\phi_j(\eta),
\end{equation}
where $\lbrace \m_j\rbrace_{j=0}^\infty$,$\lbrace \psi_j\rbrace_{j=0}^\infty$ and $\lbrace \phi_j\rbrace_{j=0}^\infty$ are collections of suitable smooth functions in $\R^n$. More specifically, we assume that for every $N\in\N$ there exists $C_N>0$ such that
\begin{equation}\label{equ4}
    \norm{\partial^\alpha\m_j}_\infty\leq C_N 2^{j(m+\abs{\alpha})}
\end{equation}
for all $\abs{\alpha}\leq N$ and some $m\in\R$, as well as
    \begin{align}\label{equ11}
        &\operatorname{supp}\psi_0\subseteq\lbrace\abs{\xi}\lesssim 1\rbrace,\quad \operatorname{supp}\psi_j\subseteq\lbrace\abs{\xi}\approx 2^j\rbrace \quad \mathrm{for} \quad j\geq 1,\nonumber\\
        &\norm{\partial^\alpha\psi_j}_\infty\leq C_N 2^{-j\abs{\alpha}} \quad \mathrm{for \ all} \quad \abs{\alpha}\leq N,
    \end{align}
    and
    \begin{align}\label{equ6}
        &\operatorname{supp}\phi_j\subseteq\lbrace\abs{\xi}\lesssim 2^j\rbrace \quad \mathrm{for} \quad j\geq 0,\nonumber\\
        &\norm{\partial^\alpha\phi_j}_\infty\leq C_N 2^{-j\abs{\alpha}} \quad \mathrm{for \ all} \quad \abs{\alpha}\leq N.
    \end{align}
Additionally, we suppose that
\begin{equation}\label{equ35}
    \phi_j=\sum_{k=0}^j\tilde{\psi}_k,\quad j\geq 0,
\end{equation}
where $\lbrace\tilde{\psi_j}\rbrace_{j=0}^\infty$ is a collection of smooth functions that satisfies the same support conditions as $\lbrace \psi_j\rbrace_{j=0}^\infty$.

The main goal of this paper is then to establish endpoint estimates for bilinear pseudodifferential operators with symbols as those described in \eqref{main_sym}.  More precisely, given a symbol $\sigma$ as in \eqref{main_sym}, we study the case when one of the arguments of the operator $T_\sigma(f,g)$ belongs to a space of Triebel-Lizorkin type and the other one to $\bmo(\R^n)$. We shall notice that $f$ and $g$ are affected differently when applying $T_\sigma$ since they are localised, in frequency, in rings or balls. In our main result (Theorem \ref{main_thm}) we distinguish the following two cases: 
\begin{enumerate}[label=(\roman*)]
\item\label{case1}When $f$ belongs to a Triebel-Lizorkin space and $g$ to a function space lying in between $L^\infty(\R^n)$ and $\bmo(\R^n)$, denoted by $X_w(\R^n)$ and described with the aid of a weight function $w$ (see Definition \ref{X_w}).

\item\label{case2} When $f$ belongs to a Besov space of the type $B_{\infty,\infty}^s(\R^n)$ and $g$ belongs to a Triebel-Lizorkin space of generalised smoothness.
\end{enumerate}
In both situations \ref{case1} and \ref{case2} we show that the function $T_\sigma(f,g)$ lies in a Triebel-Lizorkin space of generalised smoothness. We shall also point out that $\bmo(\R^n)$ is continuously embedded in $B_{\infty,\infty}^0(\R^n)$ (see Remark \ref{emb_Xw_besov}).

In Corollary \ref{cor_BS}, we extend the result in Theorem \ref{main_thm} to bilinear pseudodifferential operators with general symbol in $BS_ {1,1}^m$.

As an additional consequence of our study, we obtain some Kato-Ponce type estimates. In a previous work  \cite{Paper1} we obtained a result of this kind involving the endpoint space $\bmo(\R^n)$. More precisely, we show in \cite{Paper1}*{Corollary 6.1} that if $s>4n+1$ and $1<p<\infty$ then
\begin{equation}\label{eqpaper1}
    \norm{J^s(fg)}_{F_{p,2}^{0,1/(1+\log_+1/t)}(\R^n)}\lesssim\norm{J^sf}_{L^p(\R^n)}\norm{g}_{\bmo(\R^n)}+\norm{f}_{L^p(\R^n)}\norm{J^sg}_{\bmo(\R^n)},
\end{equation}
where $\log_+t=\log t$ if $t>1$ and $\log_+t=1$ if $0<t\leq 1$. Some of the questions arising from that result were whether it could be extended to $s>0$ or if a similar estimate holds when $0<p\leq 1$ or when $p=\infty$. In the present paper we extend that result twofold. In Corollary \ref{cor_kp} we show that the estimate in \eqref{eqpaper1} holds for $s>0$ and we are able to prove the same property for $0<p\leq 1$ replacing $L^p(\R^n)$ by $h^p(\R^n)$, the local Hardy space, and for $p=\infty$. To be more concrete, we provide a Kato-Ponce inequality when one of the terms lie in $F_{p,q}^0(\R^n)$, a space of Triebel-Lizorkin type, and the other in $\bmo(\R^n)$, when $0<p\leq\infty$, $1<q<\infty$ and $s>\tau_{p,q}$.

These kind of inequalities have been largely studied. For further results on this topic we also refer to the works by L. Grafakos and S. Oh \cite{Grafakos-Oh}, by L. Grafakos, D. Maldonado and V. Naibo \cite{Gra-Mal_Nai}, V. Naibo and A. Thomson \cite{Naibo-Thomson}, J. Brummer and V. Naibo \cites{Bru-Nai1,Bru-Nai2} or K. Koezuka and N. Tomita \cite{koe-tom}.

The paper is organised as follows. In Section \ref{prel} we recall the definition of the main function spaces and mathematical objects that we will be working with, as well as some general lemmas that we shall make use of. In Section \ref{main_results} we state the principal result obtained in this article and prove the main consequences. Finally, we give in Section \ref{Proof_main_thm} the proof of the main theorem. 

\section{Preliminaries}\label{prel}
The notation $A\lesssim B$ will be used to indicate the existence of a constant $C>0$ such that $A\leq C B$. Similarly, we will write $A\thickapprox B$ if both $A\lesssim B$ and $B\lesssim A$ hold. We will also use the notation $\langle\xi\rangle=(1+\vert\xi\vert^2)^{1/2}$ for $\xi\in\R^n$.

The space of Schwartz functions will be denoted by $\SW(\R^n)$ and its topological dual, the space of tempered distributions, by $\SW'(\R^n)$. For a function $f\in \SW(\R^n)$ we define its Fourier transform as
\[
    \mathcal{F}[f](\xi)=\widehat{f}(\xi)=\int_{\R^n}f(x)e^{-i x\xi}dx
\]
and we will write
\[
    a(tD)f(x)=\int_{\R^n}a(t\xi)\widehat{f}(\xi)e^{i x\xi}\ddd\xi
\]
for appropriate symbols $a$, or simply $a(D)$ when $t=1$.

\subsection{Function spaces} Let us recall the definition of some spaces of functions that appear in our study.

The Hardy space $H^1(\R^n)$ is defined as the set of tempered distributions $f$ for which the non-tangential maximal function
\begin{equation*}
    x\mapsto\sup_{t>0}\sup_{\abs{x-y}<t}\abs{(\Phi_t\ast f)(y)}
\end{equation*}
belongs to $L^1(\R^n)$, endowed with the norm
\begin{equation*}
    \norm{f}_{H^1(\R^n)}:=\norm{\sup_{t>0}\sup_{\abs{x-y}<t}\abs{(\Phi_t\ast f)(y)}}_{L^1(\R^n)}.
\end{equation*}
Here $\Phi$ is a Schwartz function with $\int\Phi=1$ and $\Phi_t(x)=t^{-n}\Phi(x/t)$, with $t>0$ and $x\in\R^n$. 

The local version of the Hardy space, denoted by $h^1(\R^n)$ and introduced by D. Goldberg \cite{Goldberg}, is the space of tempered distributions $f$ for which the truncated non-tangential maximal function
\begin{equation}\label{equ16}
    x\mapsto\sup_{0<t<\frac{1}{2}}\sup_{\abs{x-y}<t}\abs{(\Phi_t\ast f)(y)}
\end{equation}
belongs to $L^1(\R^n)$, endowed with the norm given by 
\begin{equation}\label{equ17}
    \norm{f}_{h^1(\R^n)}:=\norm{\sup_{0<t<\frac{1}{2}}\sup_{\abs{x-y}<t}\abs{(\Phi_t\ast f)(y)}}_{L^1(\R^n)}.
\end{equation}
The local Hardy spaces $h^p(\R^n)$, with $0<p<\infty$, are defined analogously to $h^1(\R^n)$, where it is now required that the function in \eqref{equ16} belongs to $L^p(\R^n)$ instead of $L^1(\R^n)$. The norm $\norm{\cdot}_{h^p(\R^n)}$ is defined as in \eqref{equ17} replacing $L^1(\R^n)$ by $L^p(\R^n)$. It holds that $h^p(\R^n)=L^p(\R^n)$ for all $1<p<\infty$ (see \cite{Goldberg}*{after Proposition 2}).

The space of functions with bounded mean oscillation, $\BMO(\R^n)$, which can be identified with the dual space of $H^1(\R^n)$, is the set of all those locally integrable functions $f$ on $\R^n$ for which
\[
    \norm{f}_{\BMO(\R^n)}:=\sup_Q\frac{1}{\vert Q\vert}\int_Q\vert f(x)-f_Q\vert\dd x<\infty.
\]
The supremum is taken over all cubes in $\R^n$ whose sides are parallel to the axis, while $\vert Q\vert$ denotes the Lebesgue measure of the cube $Q$ and $f_Q$ is the average of $f$ over $Q$, namely $f_Q=\frac{1}{\vert Q\vert}\int_Qf(x)\dd x$.

The local version of $\BMO(\R^n)$ was considered by D. Goldberg in \cite{Goldberg}, and it will be denoted by $\bmo(\R^n)$. It is defined to be the set of all locally integrable functions $f$ on $\R^n$ for which
\begin{equation*}
    \norm{f}_{\bmo(\R^n)}:=\sup_{\ell(Q)<1}\frac{1}{\vert Q\vert}\int_Q\vert f(x)-f_Q\vert\dd x + \sup_{\ell(Q)\geq 1}\frac{1}{\vert Q\vert}\int_Q\vert f(x)\vert\dd x<\infty.
\end{equation*}
Here $\ell(Q)$ denotes the side length of the cube $Q$. The function space $\bmo(\R^n)$ is the dual space of $h^1(\R^n)$ and it is continuously embedded in $\BMO(\R^n)$.

The following classical result for functions of bounded mean oscillation will be used later.

\begin{prop}\label{QofBMO}
    \cite{Stein}*{p.161} Let $\psi$ be a smooth function supported in a ring and satisfying $\int\psi=0$. There exists $C>0$ such that
    \[
        \norm{\psi(tD)f}_\infty\leq C\norm{f}_\BMO
    \]
    for all $f\in\BMO(\R^n)$ and $t>0$.
\end{prop}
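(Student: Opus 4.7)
The plan is to reduce the statement to a classical BMO--Schwartz kernel estimate. Let $\Psi\in\SW(\R^n)$ be defined by $\widehat{\Psi}=\psi$, so that $\psi(tD)f=\Psi_t\ast f$ with $\Psi_t(x)=t^{-n}\Psi(x/t)$. The ring support of $\psi$ (together with the hypothesis $\int\psi=0$) gives $\int_{\R^n}\Psi(x)\,\dd x=0$, and $\Psi$ has Schwartz decay, say $|\Psi(y)|\lesssim (1+|y|)^{-n-1}$.

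Since $\BMO(\R^n)$ is invariant under dilations, i.e.\ $\|f(t\,\cdot)\|_{\BMO}=\|f\|_{\BMO}$, after rescaling it suffices to prove $\|\Psi\ast f\|_\infty\lesssim\|f\|_{\BMO}$ uniformly in $x\in\R^n$. Fix $x\in\R^n$ and let $B_k=B(x,2^k)$ and $A_k=B_k\setminus B_{k-1}$ for $k\geq 1$, $A_0=B_0$. Using $\int\Psi=0$ to subtract the average $f_{B_0}$ gives
\[
\Psi\ast f(x)=\int_{\R^n}\Psi(x-y)\bigl(f(y)-f_{B_0}\bigr)\,\dd y=\sum_{k=0}^\infty\int_{A_k}\Psi(x-y)\bigl(f(y)-f_{B_0}\bigr)\,\dd y.
\]

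Next I would estimate each annular integral. For $k\geq 1$ the Schwartz bound on $\Psi$ gives $|\Psi(x-y)|\lesssim 2^{-k(n+1)}$ on $A_k$, so the $k$-th piece is controlled by
\[
2^{-k(n+1)}\int_{B_k}\bigl|f(y)-f_{B_0}\bigr|\,\dd y\leq 2^{-k(n+1)}\Bigl(\int_{B_k}|f(y)-f_{B_k}|\,\dd y+|B_k|\,|f_{B_k}-f_{B_0}|\Bigr).
\]
The first term is bounded by $|B_k|\,\|f\|_{\BMO}\lesssim 2^{kn}\|f\|_{\BMO}$, and the standard telescoping argument gives $|f_{B_k}-f_{B_0}|\lesssim k\,\|f\|_{\BMO}$, so the $k$-th piece is at most $C\,2^{-k}(1+k)\|f\|_{\BMO}$. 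The $k=0$ piece is bounded directly by $\|\Psi\|_\infty\int_{B_0}|f(y)-f_{B_0}|\,\dd y\lesssim\|f\|_{\BMO}$. Summing the geometric-type series $\sum_k 2^{-k}(1+k)$ yields $|\Psi\ast f(x)|\lesssim\|f\|_{\BMO}$ uniformly in $x$, which is the claim.

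There is no real obstacle here; the argument is entirely classical and the only minor point to check carefully is the telescoping estimate $|f_{B_k}-f_{B_0}|\lesssim k\,\|f\|_{\BMO}$, which follows by chaining the bound $|f_{B_j}-f_{B_{j-1}}|\lesssim\|f\|_{\BMO}$ (using that $B_{j-1}\subseteq B_j$ and $|B_j|\approx|B_{j-1}|$).
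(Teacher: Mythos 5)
Your proof is correct, and since the paper simply cites this fact from Stein (p.\ 161) without providing an argument, there is no in-paper proof to compare against; your argument is precisely the standard one (write $\psi(tD)f=\Psi_t*f$ with $\Psi=\mathcal{F}^{-1}\psi$ Schwartz, reduce by dilation invariance of $\BMO$ to $t=1$, use $\int\Psi=0$ to subtract $f_{B_0}$, and sum over annuli using the Schwartz decay together with the telescoping bound $|f_{B_k}-f_{B_0}|\lesssim k\|f\|_{\BMO}$). One small remark: $\int_{\R^n}\Psi=\widehat{\Psi}(0)=\psi(0)=0$ already follows from the ring support alone; the hypothesis $\int\psi=0$ is not actually needed for that step (nor anywhere else in the argument).
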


Finally, we recall the definition of (inhomogeneous) Sobolev spaces. Given $s\in\R$ and $1<p<\infty$ we define $L^p_s(\R^n)$ as the set of tempered distributions for which
\[
    \norm{f}_{L^p_s(\R^n)}:=\norm{J^sf}_{L^p(\R^n)}<\infty,
\]
where $J^sf=\mathcal{F}^{-1}[\langle\xi\rangle^s\widehat{f}]$.
\subsection{Admissible weights}
The endpoint estimates that we investigate in this article involve the following spaces, defined using a certain type of weights, which were developed previously in \cite{rod-sta} and \cite{Paper1}.
\begin{defi}\label{X_w}
    Let $w:(0,\infty)\rightarrow (0,\infty)$ be a function satisfying the following properties:
        \begin{enumerate}[label=\Roman*)]
            \item \label{I}For every {compact} interval $I\subseteq(0,\infty)$ we have that
                  \[
                    0<\inf_{t\in I}\left(\inf_{s>0}\frac{w(st)}{w(s)}\right)\leq\sup_{t\in I}\left(\sup_{s>0}\frac{w(st)}{w(s)}\right)<\infty;
                  \]
            \item \label{II} There exists $N>0$ such that $\sup_{t>0}w(t)(1+1/t)^{-N}<\infty$;
            \item \label{III} $\inf_{t>0} w(t)>0$.
        \end{enumerate}
Let $\phi$ be a Schwartz function supported in a ball centred at the origin. Then $X_w(\R^n)$ is defined to be the set of all locally integrable functions $f$ for which
    \[
        \norm{f}_{X_w(\R^n)}:=\norm{f}_{\BMO(\R^n)}+\sup_{t>0}\frac{\norm{\phi(tD)f}_\infty}{w(t)}<\infty.
    \]
\end{defi}
Motivated by the following proposition, we may think about $X_w(\R^n)$ as intermediate spaces lying in between $L^\infty(\R^n)$ and $\bmo(\R^n)$.

\begin{prop}\label{xw_prop}
    \cite{Paper1}*{Proposition 2.6} Let $w$ and $\phi$ be as in Definition \ref{X_w}. 
    \begin{enumerate}[label=\alph*)]
        \item The definition of the space $X_w(\R^n)$ does not depend on the different choices of function $\phi$, in the sense that different choices induce equivalent norms.
        \item The embeddings $L^\infty(\R^n)\subset X_w(\R^n)\subset \bmo(\R^n)$ hold.
        \item If $w\approx 1$, then $X_w(\R^n)=L^\infty(\R^n)$ with equivalent norms.
        \item \label{d} For $w(t)=1+\log_+ (1/t)$, we have that $X_w(\R^n)=\bmo(\R^n)$ with equivalent norms.
    \end{enumerate}
\end{prop}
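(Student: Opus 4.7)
The plan is to treat parts (a)--(c) as relatively direct consequences of the standing hypotheses on $w$, reserving the main effort for (d). For part (a), given two admissible choices $\phi_1, \phi_2$ (tacitly assumed nondegenerate at the origin, say $\phi_i(0)=1$), I would pick a smooth bump $\chi$ with $\chi \equiv 1$ on $\operatorname{supp}\phi_1$ and such that $\chi/\phi_2(\mu\,\cdot\,)$ is Schwartz for a suitable $\mu>0$. Then $\phi_1(tD) = \phi_1(tD)\chi(\lambda tD)$, and Young's inequality produces $\|\phi_1(tD)f\|_\infty \lesssim \|\phi_2(\mu tD)f\|_\infty$. Property (I) of $w$ (quasi-homogeneity on compact intervals) then allows the replacement of $w(\mu t)$ by $w(t)$ up to a multiplicative constant, yielding the norm equivalence.

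For (b), the embedding $L^\infty(\R^n) \subset X_w(\R^n)$ follows from Young's inequality (bounding $\|\phi(tD)f\|_\infty$ by $\|\phi\|_1\|f\|_\infty$ uniformly in $t$) combined with property (III). For the converse inclusion $X_w(\R^n) \subset \bmo(\R^n)$, the small-cube part of $\|f\|_{\bmo(\R^n)}$ is dominated by $\|f\|_{\BMO(\R^n)}\leq \|f\|_{X_w(\R^n)}$; to control the large-cube term $\sup_{\ell(Q)\geq 1}|Q|^{-1}\int_Q |f|\,\dd x$, I would split $f = \phi(D)f + (I-\phi(D))f$, bound the first piece by $\|\phi(D)f\|_\infty \leq w(1)\|f\|_{X_w(\R^n)}$, and handle the second via Schwartz kernel-decay estimates against dyadic averages of $f$ controlled by $\|f\|_{\BMO(\R^n)}$. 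Part (c) is then essentially automatic: when $w \approx 1$ the second piece of the $X_w$-norm collapses to $\sup_t\|\phi(tD)f\|_\infty$, and letting $t \to 0$ (with $\phi(0)=1$) recovers $f$ almost everywhere as a pointwise limit bounded by this supremum.

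The main obstacle is part (d), which via (b) reduces to the logarithmic estimate
\[
\|\phi(tD)f\|_\infty \lesssim (1 + \log_+(1/t))\,\|f\|_{\bmo(\R^n)}, \qquad t>0,\ f\in \bmo(\R^n).
\]
My plan is to insert a Littlewood--Paley partition $1 = \psi_0 + \sum_{j\geq 1}\psi_j$ with each $\psi_j$ ($j\geq 1$) supported in $\{|\xi|\approx 2^j\}$ and satisfying $\int \psi_j = 0$. For $t \geq 1$, the ball support of $\phi(t\,\cdot\,)$ truncates the sum to finitely many summands, and a kernel-decay argument analogous to the one used in (b) yields $\|\phi(tD)f\|_\infty \lesssim \|f\|_{\bmo(\R^n)}$. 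For $t < 1$, the number of relevant dyadic frequencies is of order $\log(1/t)$; Proposition \ref{QofBMO} bounds each piece $\phi(tD)\psi_j(D)f$ uniformly by $\|f\|_{\BMO(\R^n)}$, and summation produces the logarithmic growth. The $j=0$ term, for which $\psi_0$ is not ring-supported and lacks mean zero, is treated separately using the large-cube part of the $\bmo$-norm at unit scale.
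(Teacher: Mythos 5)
The paper does not actually contain a proof of this proposition; it is quoted from \cite{Paper1}*{Proposition 2.6}, so there is no in-paper argument to compare against. On its own merits, your strategy is the standard one and is essentially sound: a transference/multiplier argument for (a), Young's inequality plus the characterisation of $\bmo$ as $\BMO$ together with a single low-frequency $L^\infty$ bound for (b), Lebesgue differentiation as $t\to 0$ for (c), and, for (d), a Littlewood--Paley decomposition in which $O(\log(1/t))$ ring pieces are each controlled by $\|f\|_{\BMO}$ via Proposition~\ref{QofBMO} while the ball piece is controlled by $\|f\|_{\bmo}$.

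Two points deserve explicit attention. First, as you note, Definition~\ref{X_w} as reproduced here omits the nondegeneracy $\phi(0)\neq 0$; without it part (a) is false ($\phi\equiv 0$ gives $X_w=\BMO$), and your division trick $\chi/\phi_2(\mu\,\cdot\,)$ in (a) in fact needs $\mu$ chosen small enough that $\mu\operatorname{supp}\chi$ stays in the set where $\phi_2\neq 0$ — otherwise the quotient need not be smooth, since $\phi_2$ can vanish away from the origin inside its support. Second, you invoke Proposition~\ref{QofBMO} for Littlewood--Paley pieces ``$\psi_j$ ($j\geq 1$)\ldots satisfying $\int\psi_j=0$''; the standard resolution of unity $\varphi_j=\varphi_0(2^{-j}\cdot)-\varphi_0(2^{-j+1}\cdot)$ does \emph{not} satisfy $\int\varphi_j=0$ (it equals $(1-2^{-n})2^{jn}\int\varphi_0\neq 0$). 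What the underlying kernel argument actually requires is that the kernel $\mathcal{F}^{-1}[\psi_j]$ have mean zero, i.e.\ $\psi_j(0)=0$, which is automatic from the ring support. The hypothesis ``$\int\psi=0$'' in Proposition~\ref{QofBMO} as printed appears to be a slip for ``$\psi(0)=0$''; your use of it is therefore correct in substance even though the literal hypothesis-matching fails, but the write-up should make this explicit rather than asserting a moment condition that the $\psi_j$ do not satisfy.
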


We note as well that as a direct consequence of \ref{I},  \ref{II} and \ref{III}, $w$ also satisfies that for all $0<c_1\leq c_2$, there exist $0<d_1\leq d_2$ such that
\begin{equation}\label{equ9}
    c_1\leq \frac{t}{s}\leq c_2\quad \mbox{implies that}\quad  d_1\leq \frac{w(t)}{w(s)}\leq d_2.
\end{equation}

Throughout this article we will refer to the following type of functions as admissible weights, slightly modifying the definition of A. Caetano and S. Moura in \cite{cae-mou}*{Definition 2.1}.

\begin{defi}%
    Let $w:(0,1]\rightarrow (0,\infty)$ be a monotonic function, and extend it to $w:(0,\infty)\to (0,\infty)$ by defining $w(t)=w(1)$ for all $t\geq 1$. We say that $w$ is an \textit{admissible} weight if there exist $c,d>0$ such that
    \begin{equation*}
        cw(2^{-j})\leq  w(2^{-2j}) \leq d w(2^{-j})
    \end{equation*}
     for all $j\in\N$. 
\end{defi}

\begin{ex}
    The prototype of admissible weights we have in mind are those functions of the form
    \[
        w(t):=(1+\log_+(1/t))^{b_1}\brkt{1+\log (1+\log_+(1/t))}^{b_2},
    \]
    with $b_1$, $b_2\in \R$ and $b_1\cdot b_2\geq 0$.
\end{ex}

Admissible weights satisfy similar properties as those weights considered in Definition \ref{X_w}. 
\begin{lem}
    \cite{Paper1}*{Lemma 2.12} Let $w$ be an admissible weight. Then $w$ satisfies \ref{I} and \ref{II} in Definition \ref{X_w} above. Moreover, condition \ref{III} holds for an admissible weight $w$ if, and only if, $w$ is either non-increasing, or satisfies that for all $t>0$, $w(t)\approx 1$.
\end{lem}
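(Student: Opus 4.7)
The plan is to handle the three claims separately, starting with the characterisation of \ref{III}. Since $w$ is monotonic on $(0,1]$ and constant on $[1,\infty)$, its infimum equals $w(1)$ when $w$ is non-increasing on $(0,1]$ and equals $\lim_{t\to 0^+}w(t)$ otherwise. In the non-increasing case $\inf w=w(1)>0$ is automatic; in the non-decreasing case $w(t)\leq w(1)$ for all $t>0$, so $\inf w>0$ is equivalent to $w\approx 1$. This delivers both directions of the iff.

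For \ref{II}, the range $t\geq 1$ and the case in which $w$ is non-decreasing on $(0,1]$ are immediate from $w(t)\leq w(1)$. In the remaining situation, pick $j\geq 0$ with $2^{-(j+1)}<t\leq 2^{-j}$ and iterate the upper admissibility bound $w(2^{-2\ell})\leq d\,w(2^{-\ell})$ starting at $\ell=1$ to obtain $w(2^{-2^m})\leq d^m w(2^{-1})$. Taking $m=\lceil\log_2(j+1)\rceil$ and using non-increasingness yields $w(t)\leq w(2^{-(j+1)})\leq w(2^{-2^m})\lesssim (j+1)^{\log_2 d}$, which is dominated by $2^{jN}$ for any $N>0$. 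Since $(1+1/t)^{-N}\approx 2^{-jN}$, property \ref{II} follows.

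Property \ref{I} is the main obstacle. Fix $I=[a,b]\subset(0,\infty)$; the goal is to bound $w(st)/w(s)$ above and below uniformly in $s>0$ and $t\in I$. The case $w\approx 1$ being trivial, assume $w$ non-increasing on $(0,1]$. A case-split on whether $s$ and $st$ lie above or below $1$ reduces the matter to $s,st\in(0,1)$: the mixed cases confine $w$ to a compact subset of $(0,\infty)$ with endpoints determined by $I$, yielding a bounded ratio via monotonicity. The heart of the argument is the dyadic inequality
\[
C(k)^{-1}\,w(2^{-j})\leq w(2^{-(j+k)})\leq C(k)\,w(2^{-j}),\qquad j\geq 0,\ k\in\N,
\]
with $C(k)$ independent of $j$. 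The upper estimate follows by iterating $w(2^{-2\ell})\leq d\,w(2^{-\ell})$ to obtain $w(2^{-2^m j})\leq d^m w(2^{-j})$, then picking the smallest $m$ with $2^m j\geq j+k$ (so that $m\lesssim 1+\log_2(1+k)$) and invoking non-increasingness; the lower estimate is the symmetric iteration of $w(2^{-2\ell})\geq c\,w(2^{-\ell})$. Sandwiching $s$ and $st$ between consecutive dyadic points, whose indices differ by at most a constant depending only on $I$, and applying monotonicity then produce \ref{I}. The main technical obstacle is keeping the iteration count $m$ (and hence $C(k)$) dependent only on $k$ and not on $j$; this is arranged for $j\geq 1$ by the above choice of $m$, with $j=0$ handled by direct monotonicity on the bounded range determined by $k$.
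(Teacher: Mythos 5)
Your analysis of \ref{III} is correct (the dichotomy by monotonicity direction, together with the bound $w\leq w(1)$ in the non-decreasing case), and your iteration argument for \ref{II} with $m=\lceil\log_2(j+1)\rceil$ gives the required polynomial-in-$j$ growth of $w$, which is dominated by $2^{jN}$. The dyadic comparison $C(k)^{-1}w(2^{-j})\leq w(2^{-(j+k)})\leq C(k)w(2^{-j})$, with $C(k)$ depending only on $k$ via an iteration count $m\lesssim 1+\log_2(1+k)$ for $j\geq 1$, is indeed the right engine for \ref{I}, and sandwiching $s$ and $st$ between dyadic points with a bounded index gap determined by $I$ finishes the job. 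This is the natural dyadic argument; the paper only cites the result, so there is no internal proof to compare against.

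There is, however, a logical gap in your treatment of \ref{I}: you write ``the case $w\approx 1$ being trivial, assume $w$ non-increasing,'' but these alternatives do not exhaust the admissible weights. The lemma asserts \ref{I} (and \ref{II}) for \emph{every} admissible weight, including non-decreasing ones that are not $\approx 1$, such as $w(t)=(1+\log_+(1/t))^{-1}$, whose infimum is $0$. Your dyadic comparison still works there, but with the roles of trivial/nontrivial sides swapped: for non-decreasing $w$ the upper bound $w(2^{-(j+k)})\leq w(2^{-j})$ holds by monotonicity, and the substantive estimate is the lower one, obtained by iterating $w(2^{-2\ell})\geq c\,w(2^{-\ell})$ to get $w(2^{-2^mj})\geq c^m w(2^{-j})$ and then using $2^{-(j+k)}\geq 2^{-2^m j}$ together with monotonicity. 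Relatedly, the ``symmetric iteration'' you invoke for the lower estimate does not actually close the bound in the non-increasing case as written (it controls $w(2^{-2^mj})$, which then lies \emph{below} $w(2^{-(j+k)})$); you should simply observe that $w(2^{-(j+k)})\geq w(2^{-j})$ is automatic from non-increasingness. With that cleanup and the explicit treatment of the non-decreasing case, the proof is complete.
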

We notice that the case where $w\approx 1$ is not of interest, since it is equivalent to the study of the constant weight $w\equiv 1$. We will focus then on non-increasing admissible weights. We also point out that for those weights it is possible to find $b\geq 0$ such that
\begin{equation}\label{equ10}
    \sup_{s>0}\frac{w(ts)}{w(s)}\lesssim (1+\log_+1/t)^b
\end{equation}
for all $t>0$ (see the proof of \cite{Paper1}*{Lemma 2.12}).

\subsection{Function spaces of generalised smoothness}
Let us now recall the definition of function spaces of Besov and Triebel-Lizorkin type with generalised smoothness. We studied mainly the expositions in \cite{cae-mou}, \cite{dominguez2018function} and \cite{mou}.

Let $\varphi_0$ be a positive and radially monotonically decreasing Schwartz function, supported in the ball $\lbrace\abs{\xi}\leq 3/2\rbrace$, which is identically one on $\lbrace\abs{\xi}\leq 1\rbrace$. We define then $\varphi(\xi):=\varphi_0(\xi)-\varphi_0(2\xi)$ and $\varphi_j(\xi):=\varphi(2^{-j}\xi)$ for $\xi\in\R^n$ and all integers $j\geq 1$. We notice that $\varphi_j$ is supported in the annulus $\lbrace 2^{j-1}\leq\abs{\xi}\leq 2^{j+1}\rbrace$ for all $j\geq 1$ and it holds that $\sum_{j=0}^\infty\varphi_j(\xi)=1$ for all $\xi\in\R^n$. Such family $\{\varphi_j\}_{j\geq 0}$ forms a resolution of unity. 

\begin{defi}\label{GSTL}
Let $s\in\R$, $0<p\leq\infty$ and $0<q\leq\infty$. Set $w$ for an admissible weight and let $\lbrace\varphi_j\rbrace_{j=0}^\infty$ be a resolution of unity as above. 
\begin{itemize}
    \item \cites{cae-mou,mou} If $0<p<\infty$, we define the Triebel-Lizorkin space of generalised smoothness, $F_{p,q}^{s,w}(\R^n)$, to be the set of all tempered distributions $f$ for which
\[
    \norm{f}_{F_{p,q}^{s,w}(\R^n)}:=\norm{\left(\sum_{j=0}^\infty 2^{jsq}w(2^{-j})^q\abs{\varphi_j(D)f}^q\right)^{1/q}}_{L^p(\R^n)}<\infty.
\]

\item Let $\mathcal{D}$ be the set of all dyadic cubes in $\R^n$. We define $F_{\infty,q}^{s,w}(\R^n)$ to be the set of all tempered distributions $f$ for which
\begin{align*}
    \norm{f}_{F_{\infty,q}^{s,w}(\R^n)}&:=\norm{\varphi_0(D)f}_\infty\\
    &\quad+\sup_{\substack{Q\in\mathcal{D} \\ \ell(Q)\leq 1}}\left(\frac{1}{\abs{Q}}\int_Q\sum_{\ell=-\log_2 \ell(Q)}^\infty 2^{s\ell q}w(2^{-\ell})^q\abs{\varphi_\ell(D)f(x)}^q \dd x\right)^{1/q}
\end{align*}
is finite.
\item \cites{cae-mou,mou} We define the Besov space of generalised smoothness, $B_{\infty,\infty}^{s,w}(\R^n)=F_{\infty,\infty}^{s,w}(\R^n)$, to be the set of all tempered distributions $f$ for which
\[
    \sup_{j\geq 0}2^{js}w(2^{-j})\norm{\varphi_j(D)f}_\infty<\infty.
\]
\end{itemize}
\end{defi}

As stated in \cite{mou}*{p. 11}, minor modifications of the arguments in \cite{Trie83} for the classical function spaces, the quantities $\norm{\cdot}_{F_{p,q}^{s,w}}$ and $\norm{\cdot}_{B_{\infty,\infty}^{s,w}}$ do not depend on the resolution of unity appearing in the definition, in the sense that different choices of resolution of the unity produce equivalent quasi-norms.

Note that if we consider as admissible weight the constant function $w\equiv 1$, Definition~\ref{GSTL} recovers the classical Triebel-Lizorkin spaces $F_{p,q}^s(\R^n)$ and the Besov space $B_{\infty,\infty}^s(\R^n)$. In addition, it is also known that $F_{p,2}^0(\R^n)$ coincides with $h^p(\R^n)$ when $0<p<\infty$ (see \cite{Trie83}*{Theorem 2.5.8/1}) and that $F_{\infty,\infty}^{s}(\R^n)=B_{\infty,\infty}^{s}(\R^n)$ (see \cite{Runst-Sickel}*{Remark 2.2.3/3}).

\begin{rem}\label{emb_Xw_besov}
    We shall point out that $X_w(\R^n)$ is continuously embedded in the Besov space $B_{\infty,\infty}^0(\R^n)$ for every weight $w$ as in Definition \ref{X_w}. Indeed, fixed such a weight, if we set $\lbrace\varphi_j\rbrace_{j=0}^\infty$ as above, we have that
    \[
        \sup_{j\geq 1}\norm{\varphi_j(D)f}_\infty\lesssim\norm{f}_{\BMO(\R^n)}\leq\norm{f}_{X_w(\R^n)},
    \]
    where the first inequality follows from Proposition \ref{QofBMO} and the second one from the definition of the norm on $X_w(\R^n)$.
    
    The definition of $\norm{\cdot}_{X_w(\R^n)}$ yields
    \[
        \norm{\varphi_0(D)f}_\infty\lesssim w(1)\norm{f}_{X_w(\R^n)}.
    \]
    Hence $\norm{f}_{B_{\infty,\infty}^0(\R^n)}\lesssim\norm{f}_{X_w(\R^n)}$ for every weight $w$ as in Definition \ref{X_w}.
\end{rem}

\subsection{A multiplier theorem} Let us consider a sequence of functions $\lbrace f_\ell\rbrace_{\ell=0}^\infty$ on $\R^n$ and $0<p,q\leq\infty$. We define the $L^p(\ell^q)$-norm of the sequence $\lbrace f_\ell\rbrace_{\ell=0}^\infty$ as
\[
    \norm{\lbrace f_\ell\rbrace_{\ell=0}^\infty}_{L^p(\ell^q)}=\norm{\left(\sum_{\ell=0}^\infty\abs{f_\ell}^q\right)^{1/q}}_{L^p(\R^n)},
\]
with the usual modification if $p=\infty$ or $q=\infty$.

The following result due to B.J. Park (see \cite{Park}*{Lemma 2.7} or \cite{Park2}), which is an extension of \cite{Trie83}*{Theorem 1.6.3}, is an essential tool to prove our main theorem.

\begin{lem}\label{Lplq}
     Let $0<p,q\leq\infty$, $j\geq 0$, $s>\tau_{p,q}+n/2$ and $A>0$. Assume that $\lbrace m_\ell\rbrace_{\ell=0}^\infty$ satisfies
    \[
        \sup_ {\ell\geq 0}\norm{m_ \ell(2^{\ell+j}\cdot)}_{L^2_s(\R^n)}<\infty.
    \]
    \begin{enumerate}[label=(\alph*)]
        \item\label{casopfinita} If $0<p<\infty$ or $p=q=\infty$ then
        \[
            \norm{\lbrace m_\ell(D)f_{\ell+j}\rbrace_{\ell=0}^\infty}_{L^p(\ell^q)}\lesssim\left(\sup_ {\ell\geq 0}\norm{m_ \ell(2^{\ell+j}\cdot)}_{L^2_s(\R^n)}\right)\norm{\lbrace f_\ell\rbrace_{\ell=0}^\infty}_{L^p(\ell^q)}
        \]
        uniformly in $j$, for all $\lbrace f_\ell\rbrace_{\ell=0}^\infty$ such that $\operatorname{supp}\widehat{f_\ell}\subseteq\lbrace\xi\in\R^n:\abs{\xi}\leq A 2^\ell\rbrace$.
        
        \item\label{casopinfty} If $p=\infty$ and $0<q<\infty$ then
        \begin{align*}
            &\sup_{\substack{Q\in\mathcal{D} \\ \ell(Q)\leq 1}}\left(\frac{1}{\abs{Q}}\int_Q \sum_{k=-\log_2 \ell(Q)}^\infty\abs{m_\ell(D)f_{\ell+j}(x)}^q\dd x\right)^{1/q}\\
            &\quad\lesssim\sup_{\ell\geq 1}\norm{m_\ell(2^{\ell+j}\cdot)}_{L^2_s(\R^n)}\sup_{\substack{Q\in\mathcal{D} \\ \ell(Q)\leq 1}}\left(\frac{1}{\abs{Q}}\int_Q \sum_{k=-\log_2 \ell(Q)}^\infty\abs{f_{\ell+j}(x)}^q\dd x\right)^{1/q}
        \end{align*}
        uniformly in $j$, for all $\lbrace f_\ell\rbrace_{\ell=0}^\infty$ such that $\operatorname{supp}\widehat{f_\ell}\subseteq\lbrace\xi\in\R^n:\abs{\xi}\leq A 2^\ell\rbrace$.
    \end{enumerate}
\end{lem}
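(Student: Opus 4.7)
My plan is to combine a pointwise majorisation of $m_\ell(D)f_{\ell+j}$ by a Hardy--Littlewood-type maximal function of $f_{\ell+j}$ with a vector-valued maximal inequality of Fefferman--Stein type, suitably adapted in case \ref{casopinfty}.

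Since $\widehat{f_{\ell+j}}$ is supported in $\lbrace\abs{\xi}\leq A 2^{\ell+j}\rbrace$, I would introduce a smooth cutoff $\chi$ equal to $1$ on $\lbrace\abs{\xi}\leq A\rbrace$ and supported in $\lbrace\abs{\xi}\leq 2A\rbrace$, which lets one replace $m_\ell$ by the compactly supported truncation $m_\ell(\cdot)\chi(2^{-(\ell+j)}\cdot)$ without altering the action on $f_{\ell+j}$; setting $K_\ell := \F^{-1}[m_\ell(\cdot)\chi(2^{-(\ell+j)}\cdot)]$ reduces the multiplier to convolution with $K_\ell$. Peetre's maximal inequality for band-limited functions together with a Cauchy--Schwarz argument weighted by Bessel potentials then yields
\[
    \abs{m_\ell(D)f_{\ell+j}(x)}\lesssim \norm{m_\ell(2^{\ell+j}\cdot)}_{L^2_s(\R^n)}\mathcal{M}_r f_{\ell+j}(x),\qquad \mathcal{M}_r f:=(\mathcal{M}\abs{f}^r)^{1/r},
\]
for some $r$ in the interval $(0,\min(1,p,q))$ whose existence is ensured by the hypothesis $s > \tau_{p,q}+n/2$.

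For part \ref{casopfinita} with $0<p<\infty$, I would take the $L^p(\ell^q)$-norm of both sides and apply the Fefferman--Stein vector-valued maximal inequality to $\{\abs{f_{\ell+j}}^r\}_\ell$ in $L^{p/r}(\ell^{q/r})$, raising to the $1/r$-th power at the end. For $p=q=\infty$ (where $\tau_{\infty,\infty}+n/2=n/2$) Peetre is not needed: a direct Cauchy--Schwarz estimate gives $\norm{K_\ell}_{L^1(\R^n)}\lesssim\norm{m_\ell(2^{\ell+j}\cdot)}_{L^2_s(\R^n)}$ for $s>n/2$, whence $\norm{m_\ell(D)f_{\ell+j}}_\infty\leq\norm{K_\ell}_{L^1(\R^n)}\norm{f_{\ell+j}}_\infty$, and the supremum over $\ell$ completes the argument.

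Part \ref{casopinfty} is the main technical obstacle. The right-hand side is a truncated Carleson-type expression of $F^0_{\infty,q}$ flavour in which the sum starts at $-\log_2\ell(Q)$, so one needs a version of the Fefferman--Stein inequality adapted to this local norm rather than the usual $L^\infty(\ell^q)$ version. The idea would be, for each dyadic cube $Q$ with $\ell(Q)\leq 1$, to split $f_{\ell+j}$ into a part supported on a fixed dilate $Q^\ast$ of $Q$ and a complementary tail, handle the local part by the pointwise bound combined with an $L^q(Q^\ast)$-valued Fefferman--Stein estimate, and absorb the tail by exploiting the polynomial off-diagonal decay of $K_\ell$ together with the summation truncation; the delicate point is to ensure that the tail contributions interact correctly with the truncated sum so as to remain summable after averaging over $Q$.
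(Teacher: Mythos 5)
Note first that the paper does not actually prove this lemma: it is imported from B.J.\ Park's work (\cite{Park}*{Lemma 2.7}; see also \cite{Park2}), where it is obtained as a sharpened extension of \cite{Trie83}*{Theorem 1.6.3}, so there is no internal proof to compare against. Your outline for part (a) also falls quantitatively short of what the lemma claims. Running your Peetre--Cauchy--Schwarz scheme precisely: writing $m_\ell(D)f_{\ell+j}(x)=\int G_\ell(z)\,f_{\ell+j}(x-2^{-(\ell+j)}z)\,\dd z$ with $\widehat{G_\ell}=m_\ell(2^{\ell+j}\cdot)\chi$, Peetre's inequality gives $\abs{f_{\ell+j}(x-2^{-(\ell+j)}z)}\lesssim(1+\abs{z})^{n/r}\mathcal{M}_rf_{\ell+j}(x)$, and the Cauchy--Schwarz step against the Bessel weight requires $\int\langle z\rangle^{2(n/r-s)}\dd z<\infty$, that is, $s>n/r+n/2$. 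Since you also impose $r<\min(1,p,q)$ for the vector-valued Fefferman--Stein estimate, the best threshold this scheme reaches is $s>n/\min(1,p,q)+n/2=\tau_{p,q}+3n/2$, which is off by $n$ from the stated $s>\tau_{p,q}+n/2$. So the claim that a suitable $r$ ``is ensured by the hypothesis $s>\tau_{p,q}+n/2$'' is simply false, and the loss is not cosmetic: in the proof of Theorem~\ref{main_thm} the lemma is invoked with an auxiliary $\tilde s$ squeezed into $\tau_{p,q}+n/2<\tilde s<s+n/2$, so the sharp threshold is exactly what lets Theorem~\ref{main_thm} assume only $s>\tau_{p,q}$. (Your $p=q=\infty$ remark is correct, since there $\tau_{\infty,\infty}+n/2=n/2$ and a direct $L^1$-kernel estimate needs only $s>n/2$, but that is the easy case.)

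For part (b) you offer only a sketch and explicitly leave the ``delicate point'' unresolved, so this cannot count as a proof. The right-hand quantity is a truncated $F^0_{\infty,q}$-type Carleson expression, to which the usual Fefferman--Stein inequality does not directly apply at $p=\infty$, and it is far from clear that restricting to a fixed dilate of $Q$ controls the interplay between the summation truncation $\ell\geq -\log_2\ell(Q)$ and the shift by $j$. Since part (b) is precisely the endpoint case the paper relies on for its $p=\infty$ estimates, the sketch leaves the crux of the lemma unaddressed.
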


\subsection{Weighted Hardy inequality} We will make use of the following discrete version of Hardy's weighted inequality that can be found in \cite{Bennet}*{Theorem 9} (see also \cite{KuMaPe}*{Theorem 7}). 
\begin{prop}\label{wei-har}
    Let $\lbrace u_\ell\rbrace_{\ell=0}^\infty$ and $\lbrace v_\ell\rbrace_{\ell=0}^\infty$ be two sequences of positive numbers and $1<q<\infty$. The inequality
    \begin{equation}\label{whardy_ineq}
        \left(\sum_{\ell=0}^\infty\left(\sum_{k=0}^\ell a_k\right)^q u_\ell\right)^{1/q}\leq C\left(\sum_{\ell=0}^\infty a_\ell^q v_\ell\right)^{1/q}
    \end{equation}
    is satisfied for some constant $C>0$ and all positive sequences $\lbrace a_\ell\rbrace_{\ell=0}^\infty$ if, and only if,
    \[
        \mathcal{A}:=\sup_{\ell\geq 0}\left(\sum_{k=\ell}^\infty u_k\right)^{1/q}\left(\sum_{k=0}^\ell v_k^{1-q'}\right)^{1/q'}<\infty,
    \]
    where $1/q+1/q'=1$.
    
    The smallest constant $C$ such that \eqref{whardy_ineq} holds satisfies
    \[
        \mathcal{A}\leq C\leq q^{1/q}(q')^{1/q'}\mathcal{A}.
    \]
\end{prop}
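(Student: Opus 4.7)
The plan is to prove the two implications separately. For the direction $C<\infty\Rightarrow \mathcal{A}<\infty$, I would test \eqref{whardy_ineq} on a one-parameter family of sequences, while for the converse the idea is to combine Hölder's inequality (with an auxiliary weight depending on the ``distribution function'' $V_k:=\sum_{j=0}^{k}v_j^{1-q'}$) together with a discrete summation-by-parts estimate.

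\textbf{Necessity.} Fix $N\in\N$ and plug $a_k=v_k^{1-q'}\mathbf{1}_{\{k\le N\}}$ into \eqref{whardy_ineq}. Using the identity $q(1-q')+1=1-q'$ (which follows from $qq'=q+q'$), the right-hand side reduces to $C\,V_N^{1/q}$, whereas restricting the left-hand side to $\ell\ge N$ produces the lower bound $V_N\bigl(\sum_{\ell\ge N}u_\ell\bigr)^{1/q}$. Dividing and taking $\sup_N$ then yields $\mathcal{A}\le C$.

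\textbf{Sufficiency.} Fix $0<\epsilon<1/q'$. Decompose
\[
    \sum_{k=0}^\ell a_k=\sum_{k=0}^\ell \bigl(a_k v_k^{1/q}V_k^{\epsilon}\bigr)\bigl(v_k^{-1/q}V_k^{-\epsilon}\bigr),
\]
and apply Hölder's inequality with exponents $q$ and $q'$. The dual factor $\bigl(\sum_{k=0}^\ell v_k^{1-q'}V_k^{-\epsilon q'}\bigr)^{1/q'}=\bigl(\sum_{k=0}^\ell (V_k-V_{k-1})V_k^{-\epsilon q'}\bigr)^{1/q'}$ is controlled through a discrete integral comparison (since $x\mapsto x^{-\epsilon q'}$ is decreasing) by $(1-\epsilon q')^{-1/q'}V_\ell^{(1-\epsilon q')/q'}$. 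Raising everything to the $q$-th power, multiplying by $u_\ell$ and interchanging the order of summation leads to
\[
    \sum_{\ell\ge 0}\Bigl(\sum_{k=0}^\ell a_k\Bigr)^q u_\ell \lesssim \sum_{k\ge 0}a_k^q v_k V_k^{\epsilon q}\sum_{\ell\ge k}u_\ell V_\ell^{q/q'-\epsilon q}.
\]
A second Abel summation-by-parts, together with the working hypothesis $\sum_{\ell\ge j}u_\ell\le \mathcal{A}^q V_j^{-q/q'}$, reduces the inner sum to a multiple of $\mathcal{A}^q V_k^{-\epsilon q}$, at which point cancellation with the $V_k^{\epsilon q}$ factor produces the claimed estimate.

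\textbf{Main obstacle.} The delicate part is the second summation-by-parts: to recover the sharp constant $q^{1/q}(q')^{1/q'}$ one must exploit the $\mathcal{A}$-hypothesis at every truncation level simultaneously and keep careful track of the Hölder and Abel constants, ultimately optimising in $\epsilon\to 0^+$. A coarser argument easily gives $C\lesssim \mathcal{A}$, but the stated bound $C\le q^{1/q}(q')^{1/q'}\mathcal{A}$ requires the finer bookkeeping and is the central technical point of the proof.
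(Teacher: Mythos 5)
The paper does not prove this proposition; it is quoted verbatim from Bennett's \emph{Some elementary inequalities. III} (Theorem~9), with the reference \cite{KuMaPe} also cited. So there is no proof in the paper to compare against, and the right question is simply whether your argument is sound.

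Your necessity step is correct: with $a_k=v_k^{1-q'}\mathbf{1}_{\{k\le N\}}$, the identity $q(1-q')+1=1-q'$ gives the right-hand side $CV_N^{1/q}$, and truncating the left side to $\ell\ge N$ gives $V_N\bigl(\sum_{\ell\ge N}u_\ell\bigr)^{1/q}$, whence $\mathcal A\le C$. The sufficiency step is also the standard Muckenhoupt--Bennett scheme and the computations go through as you describe: H\"older with the auxiliary weight $V_k^{\pm\epsilon}$, the integral-comparison bound
$\sum_{k\le\ell}v_k^{1-q'}V_k^{-\epsilon q'}\le(1-\epsilon q')^{-1}V_\ell^{1-\epsilon q'}$, Fubini, and then the Abel/integral comparison on $\sum_{\ell\ge k}u_\ell V_\ell^{q/q'-\epsilon q}$ together with $U_\ell:=\sum_{j\ge\ell}u_j\le\mathcal A^q V_\ell^{-q/q'}$ yields $\sum_{\ell\ge k}u_\ell V_\ell^{q/q'-\epsilon q}\le\frac{\mathcal A^q}{\epsilon q'}V_k^{-\epsilon q}$. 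Altogether this gives
$C\le(1-\epsilon q')^{-1/q'}(\epsilon q')^{-1/q}\,\mathcal A$ for every $\epsilon\in(0,1/q')$.

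The one genuine error is your final remark that one recovers the sharp constant ``ultimately optimising in $\epsilon\to 0^+$.'' That limit diverges, since the factor $(\epsilon q')^{-1/q}$ blows up as $\epsilon\to0^+$. The correct optimisation is at the interior point $\epsilon q'=1/q$, i.e. $\epsilon=1/(qq')$, for which $(1-\epsilon q')^{-1/q'}(\epsilon q')^{-1/q}=(1/q')^{-1/q'}(1/q)^{-1/q}=q^{1/q}(q')^{1/q'}$, giving exactly the stated bound. So the ``finer bookkeeping'' you allude to is not a delicate limiting procedure but simply choosing the right fixed $\epsilon$; with that correction the proof sketch is complete and yields the sharp constant directly.
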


\section{Main results}\label{main_results}

We state the main theorem of this paper.
\begin{thm}\label{main_thm}
    Let us consider the operator $T_\sigma$ with $\sigma$ as in \eqref{main_sym} and let $w$ be a non-increasing admissible weight.
    \begin{enumerate}[label=\Roman*)]
        \item\label{equ31}If $0<p\leq\infty$, $0<q\leq\infty$, $m\in\R$ and $s>\tau_{p,q}$ then we can find $C>0$ such that
        \[
            \norm{T_\sigma(f,g)}_{F_{p,q}^{s,1/w}(\R^n)}\leq C\norm{f}_{F_{p,q}^{s+m}(\R^n)}\norm{g}_{X_w(\R^n)}
        \]
        for all $f,g\in\SW(\R^n)$.
        \item\label{equ25} Let $0<p\leq \infty$, $1<q<\infty$, $m\in\R$, $s>\tau_{p,q}$ and take an admissible weight $v$ such that
        \begin{equation}\label{equ36}
            \sup_{\ell\geq 0}\left(\sum_{k=\ell}^\infty\frac{1}{w(2^{-k})^q}\right)^{1/q}\left(\sum_{k=0}^\ell \frac{1}{v(2^{-k})^{q'}}\right)^{1/q'}<\infty.
        \end{equation}
        Then we can find $C>0$ such that
        \[
            \norm{T_{\sigma}(f,g)}_{F_{p,q}^{s,1/w}(\R^n)}\leq C\norm{f}_{B_{\infty,\infty}^{s+m}(\R^n)}\norm{g}_{F_{p,q}^{0,v}(\R^n)}
        \]
        for all $f,g\in\SW(\R^n)$.
    \end{enumerate}
\end{thm}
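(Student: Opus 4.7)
The plan is to estimate both cases through the common decomposition
\[
T_\sigma(f,g)(x)=\sum_{j=0}^\infty h_j(x),\qquad h_j(x):=\m_j(x)\,\psi_j(D)f(x)\,\phi_j(D)g(x).
\]
The product $\psi_j(D)f\cdot\phi_j(D)g$ has Fourier support in $\{|\xi|\lesssim 2^{j+c}\}$ and, by \eqref{equ4}, the Fourier transform of $\m_j$ has Schwartz decay outside $\{|\xi|\lesssim 2^{j}\}$, so each $h_j$ is essentially localised at scale $2^j$. My first step is to establish the Littlewood--Paley-type bound
\[
\|T_\sigma(f,g)\|_{F_{p,q}^{s,1/w}(\R^n)}\lesssim \left\|\left\{\frac{2^{js}}{w(2^{-j})}\,h_j\right\}_{j\geq 0}\right\|_{L^p(\ell^q)},
\]
with the dyadic-cube integral from Definition~\ref{GSTL} replacing $L^p(\ell^q)$ when $p=\infty$. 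I would obtain this by expanding $\varphi_\ell(D)h_{\ell+j}$ as a Fourier multiplier acting on a frequency-restricted function and invoking Lemma~\ref{Lplq}\ref{casopfinita} or \ref{casopinfty} for each shift $j\geq 0$, using the hypothesis $s>\tau_{p,q}$ to sum in~$j$.

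For Part~\ref{equ31}, the size bound $|\m_j(x)|\lesssim 2^{jm}$ combined with the uniform estimate $\|\phi_j(D)g\|_\infty\lesssim w(2^{-j})\|g\|_{X_w(\R^n)}$, which follows from Definition~\ref{X_w}, Proposition~\ref{xw_prop}, and \eqref{equ9} after writing $\phi_j(\xi)=\Phi_j(2^{-j}\xi)$ with $\{\Phi_j\}$ a bounded family of smooth functions supported in a common ball, yields the pointwise inequality
\[
\frac{2^{js}}{w(2^{-j})}|h_j(x)|\lesssim \|g\|_{X_w(\R^n)}\cdot 2^{j(s+m)}|\psi_j(D)f(x)|.
\]
Taking the $L^p(\ell^q)$ norm (or its dyadic-cube analogue when $p=\infty$) and applying the Littlewood--Paley characterisation of $F_{p,q}^{s+m}(\R^n)$ then completes this case.

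For Part~\ref{equ25}, I would use the Besov estimate $\|\psi_j(D)f\|_\infty\lesssim 2^{-j(s+m)}\|f\|_{B_{\infty,\infty}^{s+m}(\R^n)}$ together with the telescoping~\eqref{equ35}, obtaining
\[
\frac{2^{js}}{w(2^{-j})}|h_j(x)|\lesssim \frac{\|f\|_{B_{\infty,\infty}^{s+m}(\R^n)}}{w(2^{-j})}\sum_{k=0}^j|\tilde\psi_k(D)g(x)|.
\]
For every fixed $x\in\R^n$, Proposition~\ref{wei-har} applied to $a_k=|\tilde\psi_k(D)g(x)|$ with $u_\ell=w(2^{-\ell})^{-q}$ and $v_\ell=v(2^{-\ell})^q$ gives
\[
\left(\sum_{j\geq 0}\frac{1}{w(2^{-j})^q}\Bigl(\sum_{k=0}^j a_k\Bigr)^q\right)^{1/q}\lesssim\left(\sum_{k=0}^\infty v(2^{-k})^q a_k^q\right)^{1/q},
\]
since $v_\ell^{1-q'}=v(2^{-\ell})^{-q'}$, which converts the Hardy criterion into precisely \eqref{equ36}; notice that this is the point at which the assumption $1<q<\infty$ enters. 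Integrating in $x$ when $p<\infty$, or taking the supremum of dyadic-cube integrals as in Definition~\ref{GSTL} when $p=\infty$, then recovers $\|g\|_{F_{p,q}^{0,v}(\R^n)}$.

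The main obstacle is the preliminary Littlewood--Paley bound: because $\m_j$ is not strictly frequency-compactly supported, the essential localisation of $h_j$ must be quantified carefully. Concretely, after splitting $\varphi_\ell(D)h_{\ell+j}$ one must verify uniform-in-$\ell$ bounds on the $L^2_s$ norms of the rescaled multipliers that arise when applying Lemma~\ref{Lplq}, with a decay in $j$ strong enough to survive the multiplication by $2^{js}w(2^{-\ell})^{-1}$ and the final summation in $j$; the symbol estimates \eqref{equ4}, \eqref{equ11} and \eqref{equ6} are tailored to deliver exactly this decay. Treating the endpoint $p=\infty$, $q<\infty$ in parallel via Lemma~\ref{Lplq}\ref{casopinfty} adds further bookkeeping, since one must extract the factor $w(2^{-\ell})^{-1}$ uniformly through the dyadic-cube integrals that define $F_{\infty,q}^{s,1/w}(\R^n)$.
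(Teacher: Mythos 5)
Your high-level plan captures two ingredients that the paper does use: the pointwise bound $\|\phi_j(D)g\|_\infty\lesssim w(2^{-j})\|g\|_{X_w(\R^n)}$ coming from Definition~\ref{X_w}, and in Part~\ref{equ25} the telescoping $\phi_j=\sum_{k=0}^j\tilde\psi_k$ followed by the weighted Hardy inequality of Proposition~\ref{wei-har}, which indeed converts the criterion exactly into \eqref{equ36} and is where $1<q<\infty$ enters. But the step you identify as ``the main obstacle'' is precisely where the proposal stops short of an argument, and the intermediate inequality you want to establish is not correct as stated.

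The bound
\[
\|T_\sigma(f,g)\|_{F_{p,q}^{s,1/w}(\R^n)}\lesssim\left\|\left\{\tfrac{2^{js}}{w(2^{-j})}h_j\right\}_{j\geq 0}\right\|_{L^p(\ell^q)}
\]
cannot be obtained by applying Lemma~\ref{Lplq} to $\varphi_\ell(D)h_{\ell+j}$, because $h_n=\m_n(\psi_n(D)f)(\phi_n(D)g)$ is \emph{not} frequency-localised: $\psi_n(D)f\cdot\phi_n(D)g$ is supported in $\{|\xi|\lesssim 2^n\}$, but $\m_n$ has unrestricted Fourier support, so $\varphi_\ell(D)h_n\neq 0$ for every $\ell$, including $\ell\gg n$. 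Lemma~\ref{Lplq} explicitly requires $\operatorname{supp}\widehat{f_\ell}\subseteq\{|\xi|\leq A2^\ell\}$, which fails here. The paper resolves this by first decomposing $\m_j=\m_{j,0}+\sum_{k\geq 1}\m_{j,k}$ with $\m_{j,k}=\varphi(2^{-j-k}D)\m_j$ and proving, via Taylor expansion and the vanishing moments of $\varphi$, that $\|\m_{j,k}\|_\infty\lesssim 2^{jm-kN}$ for any $N$; only then does each piece $\m_{j,k}(\psi_j(D)f)(\phi_j(D)g)$ have compact frequency support $\subseteq\{|\xi|\lesssim 2^{j+k}\}$ so that Lemma~\ref{Lplq} applies, and the rapid decay in $k$ is what makes the resulting double sum converge. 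The paper never passes through a single-indexed Littlewood--Paley bound of the form you propose; it carries the double sum in $(j,k)$ all the way to the end. You would need to introduce this $\m_{j,k}$ decomposition explicitly for the argument to go through.

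Two further gaps in the sketch: after the reindexing $j_{(k,\ell)}=j-k+\ell-L$ one is left with the weight quotient $w(2^{-\ell})/w(2^{j-k-\ell-L})$, and absorbing it requires the monotonicity and admissibility of $w$ in the quantitative form \eqref{equ10}, giving a factor $(1+j)^b$ that must be beaten by the exponential decay $2^{-j(s-\tilde s+n/2)}$; this step is entirely absent from your sketch and is where the hypothesis that $w$ is non-increasing and admissible is used. And for $p=\infty$, $0<q<\infty$ the dyadic-cube formulation of $F_{\infty,q}^{s,1/w}$ forces a genuine case analysis (the paper's cases $j-k-L\geq 0$, $\log_2\ell(Q)\leq j-k-L<0$, and $j-k-L<\log_2\ell(Q)$) in which one enlarges the cube $Q$ to a dyadic $\tilde Q$ and pays a factor $2^{(-j+k+L)n/q}$; dismissing this as ``bookkeeping'' hides the fact that it is the mechanism by which the sum over $j<k+L$ remains convergent.
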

\begin{ex}
Let us give some examples of combinations of admissible weights $w$ and $v$ satisfying  \eqref{equ36}.
\begin{enumerate}\label{ex_w}
    \item Let $w(t)=(1+\log_+ 1/t)^{\alpha}$, with $\alpha\geq 1>1/q$, and $v\equiv 1$. The condition in \eqref{equ36} is equivalent to 
    \[
        \sup_{\ell\geq 0}   \brkt{\sum_{k=\ell}^\infty \brkt{1+k}^{-\alpha q}}^{1/q}(1+\ell)^{1/q'}
    \]
    since $w(2^{-k})\approx(1+k)^\alpha$. By using that $\alpha>1/q$ we observe that
    \[
        \brkt{\sum_{k=\ell}^\infty \brkt{1+k}^{-\alpha q}}^{1/q}\leq\left(\int_{\ell-1}^\infty(1+x)^{-\alpha q}\dd x\right)^{1/q}\lesssim \ell^{-\alpha+1/q},
    \]
    from where, since $\alpha\geq 1$ it follows that
    \[
        \sup_{\ell\geq 1}   \brkt{\sum_{k=\ell}^\infty \brkt{1+k}^{-\alpha q}}^{1/q}(1+\ell)^{1/q'}\lesssim\sup_{\ell\geq 1}\ell^{-\alpha+1}<\infty.
    \]
    \item Let $w(t)=(1+\log_+ 1/t)^{\alpha}$ with $\alpha>1/q$ and $v(t)=(1+\log_+ 1/t)^{\beta}$ with $\beta>1/q'$. Then \eqref{equ36} is equivalent to
     \[
        \sup_{\ell\geq 0}   \brkt{\sum_{k=\ell}^\infty \brkt{1+k}^{-\alpha q}}^{1/q}\brkt{\sum_{k=0}^\ell \brkt{1+k}^{-\beta q'}}^{1/q'},
    \]
    which is bounded by
    \[
        \brkt{\sum_{k=0}^\infty \brkt{1+k}^{-\alpha q}}^{1/q}\brkt{\sum_{k=0}^\infty \brkt{1+k}^{-\beta q'}}^{1/q'}.
    \]
    The two previous series are convergent when $\alpha>1/q$ and $\beta>1/q'$.
    \item Let $w(t)=(1+\log_+ 1/t)^\alpha$ and $v(t)=(1+\log_+ 1/t)^{-\beta}$ with $\beta>0$ and $\alpha\geq 1+\beta$. Then \eqref{equ36} is equivalent to
    \[
        \sup_{\ell\geq 0}   \brkt{\sum_{k=\ell}^\infty \brkt{1+k}^{-\alpha q}}^{1/q}\brkt{\sum_{k=0}^\ell \brkt{1+k}^{\beta q'}}^{1/q'}.
    \]
    In that case, we notice that
    \[
        \brkt{\sum_{k=\ell}^\infty \brkt{1+k}^{-\alpha q}}^{1/q}\brkt{\sum_{k=0}^\ell \brkt{1+k}^{\beta q'}}^{1/q'}\lesssim\ell^{-\alpha+1/q}\ell^{\beta+1/q'}=\ell^{-\alpha+\beta+1},
    \]
    where $\sup_{\ell\geq 1}\ell^{-\alpha+\beta+1}<\infty$ when $\alpha\geq 1+\beta$.
\end{enumerate}    
\end{ex}
As a consequence of Theorem \ref{main_thm} we obtain the following boundedness property for pseudodifferential operators with symbol in the bilinear H\"ormander class $BS^m_{1,1}$.
\begin{cor}\label{cor_BS}
    Let $0<p\leq\infty$, $1<q<\infty$, $s>\tau_{p,q}$, $m\in\R$ and  $\sigma\in BS_{1,1}^m$. Set $w$, $v$ admissible weights such that $w$ is decreasing and \eqref{equ36} is satisfied. There exist $C>0$ and a positive integer $N$ such that
    \[
        \norm{T_\sigma(f,g)}_{F_{p,q}^{s,1/w}(\R^n)}\leq C\norm{\sigma}_{BS_{1,1;N}^m}\left(\norm{f}_{F_{p,q}^{s+m}(\R^n)}\norm{g}_{X_w(\R^n)}+\norm{f}_{F_{p,q}^{0,v}(\R^n)}\norm{g}_{B_{\infty,\infty}^{s+m}(\R^n)}\right)
    \]
    for all $f,g\in\SW(\R^n)$.
\end{cor}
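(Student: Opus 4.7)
My plan is to reduce a general symbol $\sigma\in BS_{1,1}^m$ to a superposition of symbols of the product form \eqref{main_sym} (and the symmetric form obtained by swapping $\xi\leftrightarrow\eta$), and then apply Theorem \ref{main_thm} termwise. First I would split $\sigma=\sigma^I+\sigma^{II}$ with a smooth partition of unity subordinate to the dichotomy $\abs{\eta}\leq 2\abs{\xi}$ versus $\abs{\xi}<\abs{\eta}$; both pieces still belong to $BS_{1,1}^m$ with seminorms dominated by $\norm{\sigma}_{BS_{1,1;N}^m}$, and $T_\sigma=T_{\sigma^I}+T_{\sigma^{II}}$. This kind of reduction is standard in the theory of $BS_{1,1}^m$ and follows the lines of \cite{ben-tor}, \cite{koe-tom} and \cite{Park}.

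For $\sigma^I$, introduce a dyadic partition of unity $\set{\psi_j(\xi)}_j$ with $\psi_j$ supported in $\abs{\xi}\approx 2^j$. The support constraint $\abs{\eta}\lesssim\abs{\xi}$ forces $\sigma_j^I:=\sigma^I\psi_j$ to be supported in $\abs{\eta}\lesssim 2^j$, matching exactly the geometry \eqref{equ11}--\eqref{equ6}. The tensor-product structure is then obtained by rescaling and Fourier expansion: the function $2^{-jm}\sigma_j^I(x,2^j\xi,2^j\eta)$, multiplied by a fixed smooth cutoff on a slightly larger region, is expanded as a Fourier series in $(\xi,\eta)$ over a large cube, and repeated integration by parts against the $BS_{1,1}^m$ estimates yields coefficients $a_{\ell_1,\ell_2}^j(x)$ satisfying
\[
\norm{\partial^\alpha_x a_{\ell_1,\ell_2}^j}_\infty\lesssim_{M,\alpha} 2^{j(m+\abs{\alpha})}\norm{\sigma}_{BS_{1,1;N}^m}(1+\abs{\ell_1}+\abs{\ell_2})^{-M}
\]
for every $M\in\N$, with $N=N(M,\alpha)$. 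Undoing the rescaling exhibits $\sigma^I$ as an absolutely summable series over $(\ell_1,\ell_2)\in\Z^{2n}$ of symbols of the form \eqref{main_sym} fulfilling \eqref{equ4}--\eqref{equ35} uniformly in $(\ell_1,\ell_2)$; the exponentials $e^{i\ell_1\cdot 2^{-j}\xi}$ and $e^{i\ell_2\cdot 2^{-j}\eta}$ only translate $f$ and $g$ in the physical space and therefore leave invariant every function-space norm in the statement. Applying Theorem \ref{main_thm}\ref{equ31} to each summand and summing via the rapid decay in $(\ell_1,\ell_2)$ yields
\[
\norm{T_{\sigma^I}(f,g)}_{F_{p,q}^{s,1/w}}\lesssim\norm{\sigma}_{BS_{1,1;N}^m}\norm{f}_{F_{p,q}^{s+m}}\norm{g}_{X_w}.
\]

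For $\sigma^{II}$ the roles of $\xi$ and $\eta$ are swapped, and the same reduction produces a sum of symbols that become of the form \eqref{main_sym} after the change $(\xi,\eta)\mapsto(\eta,\xi)$. Since $T_{\sigma^{II}}(f,g)=T_{\widetilde\sigma^{II}}(g,f)$ with $\widetilde\sigma^{II}(x,\xi,\eta):=\sigma^{II}(x,\eta,\xi)$, Theorem \ref{main_thm}\ref{equ25} applied to $(g,f)$ furnishes
\[
\norm{T_{\sigma^{II}}(f,g)}_{F_{p,q}^{s,1/w}}\lesssim\norm{\sigma}_{BS_{1,1;N}^m}\norm{g}_{B_{\infty,\infty}^{s+m}}\norm{f}_{F_{p,q}^{0,v}},
\]
and adding the two bounds concludes. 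The main technical obstacle is the bookkeeping for the Fourier expansion: one has to verify that every summand of the series truly satisfies \eqref{equ4}--\eqref{equ35} uniformly in $(\ell_1,\ell_2)$, and that any polynomial loss in $(\abs{\ell_1},\abs{\ell_2})$ produced by Theorem \ref{main_thm} when applied to the translated inputs is absorbed by the arbitrary rapid decay $(1+\abs{\ell_1}+\abs{\ell_2})^{-M}$ of the coefficients, so that the resulting double sum converges inside the $F_{p,q}^{s,1/w}$ quasi-norm.
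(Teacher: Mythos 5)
Your proposal follows essentially the same route as the paper: split the symbol according to $\abs{\eta}\lesssim\abs{\xi}$ versus $\abs{\xi}\lesssim\abs{\eta}$ (the paper does this with the discrete Littlewood--Paley sum \eqref{equ2} rather than a smooth cutoff, which is a cosmetic difference), expand each piece into a Fourier series in the rescaled frequency variables, and apply Theorem~\ref{main_thm} to each summand, absorbing polynomial growth by the rapid decay of the coefficients.

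One remark in the middle of your argument is misleading: you claim that the exponentials $e^{i\ell_1\cdot 2^{-j}\xi}$ and $e^{i\ell_2\cdot 2^{-j}\eta}$ ``only translate $f$ and $g$ in physical space and therefore leave invariant every function-space norm.'' That is not correct as stated, because the translation amounts $2^{-j}\ell_1$ and $2^{-j}\ell_2$ depend on the dyadic index $j$, so after summing over $j$ there is no single translated input to which one could appeal. The actual mechanism, as in the paper, is to absorb the exponentials into the cutoffs, producing $\psi_j^{(\ell_1)}(\xi)=(1+\abs{\ell_1})^{-a''}e^{i\ell_1 2^{-j}\xi}\varphi_j(\xi)$ and $\phi_j^{(\ell_2)}(\eta)=(1+\abs{\ell_2})^{-b''}e^{i\ell_2 2^{-j}\eta}\varphi_0(2^{-j}\eta)$; these satisfy $\norm{\partial^\alpha\psi_j^{(\ell_1)}}_\infty\lesssim 2^{-j\abs{\alpha}}(1+\abs{\ell_1})^{-a''+N}$ and similarly for $\phi_j^{(\ell_2)}$, so taking $a'',b''$ large enough makes them uniformly admissible in the sense of \eqref{equ11}--\eqref{equ6}. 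You do acknowledge at the end that a polynomial loss in $(\abs{\ell_1},\abs{\ell_2})$ must be compensated by the rapid decay of the coefficients, which is the correct fix; but this is inconsistent with the translation-invariance remark, which would instead yield uniformity for free. If the translation claim were the load-bearing justification, the proof would have a gap; since you also describe the correct bookkeeping, the argument stands, and I would simply drop the translation observation.
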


\begin{proof}
The decomposition of the symbol follows the strategy used by K. Koezuka and N. Tomita in \cite{koe-tom}*{Theorem 1.1} (see also \cite{ben-tor}). We repeat the details for the sake of completeness.

Let $\sigma\in BS_{1,1}^m$ and let $\lbrace \varphi_j\rbrace_{j=0}^\infty$ be as in Definition \ref{GSTL}. We notice that
    \begin{equation}\label{equ2}
        \sigma(x,\xi,\eta)=\sum_{j=0}^\infty\sum_{k=0}^j\sigma(x,\xi,\eta)\varphi_j(\xi)\varphi_k(\eta)+\sum_{k=1}^\infty\sum_{j=0}^{k-1}\sigma(x,\xi,\eta)\varphi_j(\xi)\varphi_k(\eta).
    \end{equation}
    Let us define $\sigma^0$ as the former sum and $\sigma^1$ as the later one. We write
    \begin{equation}\label{equ1}
        \sigma^0(x,\xi,\eta)=\sum_{j=0}^\infty\sum_{k=0}^j\sigma(x,\xi,\eta)\varphi_j(\xi)\varphi_k(\eta)=\sum_{j=0}^\infty\sigma_j(x,\xi,\eta)
    \end{equation}
    with $\sigma_j(x,\xi,\eta)=\sigma(x,\xi,\eta)\varphi_j(\xi)\varphi_0(2^{-j}\eta)$, $j\geq 0$.
    
    Next let us take $\chi_0,\chi\in\SW(\R^n)$ with $\chi_0$ supported in the ball $\lbrace\abs{\xi}\leq 3\rbrace$ and $\chi$ supported in the ring $\lbrace 1/3\leq\abs{\xi}\leq 3\rbrace$, with $\chi_0$ and $\chi$ being identically one on $\lbrace\abs{\xi}\leq 2\rbrace$ and $\lbrace 1/2\leq\abs{\xi}\leq 2\rbrace$ respectively. Define $\chi_j(\xi):=\chi(2^{-j}\xi)$ for $j\geq 1$, in such a way that the functions $\chi_j$ are identically one in the support of $\varphi_j$ for all $j\geq 0$.
    
    Then we consider the Fourier series expansion
    \[
        \sigma_j(x,\xi,\eta)=\sum_{k,\ell\in\Z^n}c_{j,k,\ell}(x)e^{ik(2^{-j}\xi)}\varphi_j(\xi)e^{i\ell(2^{-j}\eta)}\varphi_0(2^{-j}\eta)
    \]
    where
    \[
        c_{j,k,\ell}=\frac{1}{(2\pi)^n}\iint\sigma(x,2^j\xi,2^j\eta)\chi(\xi)\chi_0(\eta)e^{-i(k\xi+\ell\eta)}\dd\xi\dd\eta,
    \]
    with $\chi_0(\xi)$ instead of $\chi(\xi)$ when $j=0$.
    
    Using an integration by parts argument it follows that for any positive integers $a,b,N$ there is $N'\in\N$ such that
    \begin{equation}\label{eqcoef}
        \sup_{j\geq 0,k,\ell\in\Z^n}2^{-j(m+\abs{\alpha})}(1+\abs{k})^a(1+\abs{\ell})^b\norm{\partial^\alpha c_{j,k,\ell}}_\infty\lesssim\norm{\sigma}_{BS^m_{1,1;N'}}
    \end{equation}
    for all $\abs{\alpha}\leq N$.
    
    For any positive integers $a',a'',b',b''$ we can write $\sigma_j$ as
    \[
        \sum_{k,\ell\in\Z^n}(1+\abs{k})^{-a'}(1+\abs{\ell})^{-b'}\m_j^{(k,\ell)}(x)\psi_j^{(k)}(\xi)\phi_j^{(\ell)}(\eta)
    \]
    where
    \begin{align*}
        &\m_j^{(k,\ell)}(x)=(1+\abs{k})^{a'+a''}(1+\abs{\ell})^{b'+b''}c_{j,k,\ell}(x),\\
        &\psi_j^{(k)}(\xi)=(1+\abs{k})^{-a''}e^{ik(2^{-j}\xi)}\varphi_j(\xi),\\
        &\phi_j^{(\ell)}(\eta)=(1+\abs{\ell})^{-b''}e^{i\ell(2^{-j}\eta)}\varphi_0(2^{-j}\eta).
    \end{align*}
    Therefore, the symbol $\sigma^0$ can be then written as
    \[
       \sum_{k,\ell\in\Z^n}(1+\abs{k})^{-a'}(1+\abs{\ell})^{-b'}\left(\sum_{j=0}^\infty \m_j^{(k,\ell)}(x)\psi_j^{(k)}(\xi)\phi_j^{(\ell)}(\eta)\right).
    \]
    We observe that by using \eqref{eqcoef} it is possible to find, for a given $a,b,N\in\N$, a positive integer $N'$ such that
    \[
        \abs{\partial^\alpha\m_j^{(k,\ell)}(x)}\lesssim 2^{j(m+\abs{\alpha})}(1+\abs{k})^{a'+a''-a}(1+\abs{\ell})^{b'+b''-b}\norm{\sigma}_{BS^m_{1,1;N'}}
    \]
    for all $x\in\R^n$ and $\abs{\alpha}\leq N$. In addition, we can also find $C_N>0$ such that
    \[
        \abs{\partial^\alpha\psi_j^{(k)}(\xi)}\leq C_N 2^{-j\abs{\alpha}}(1+\abs{k})^{-a''+N}
    \]
    and
    \[
        \abs{\partial^\alpha\phi_j^{(\ell)}(\eta)}\leq C_N 2^{-j\abs{\alpha}}(1+\abs{\ell})^{-b''+N}
    \]
    for all $\xi,\eta\in\R^n$ and $\abs{\alpha}\leq N$.  Hence by choosing $a',a'',b',b''$ large enough we can reduce ourselves to establish the desired boudnedness properties for the symbol in \eqref{main_sym}. Then we can apply Theorem \ref{main_thm} \ref{equ31} to obtain that
    \begin{equation}\label{sigma0}
            \norm{T_{\sigma^0}(f,g)}_{F_{p,q}^{s,1/w}(\R^n)}\lesssim\norm{f}_{F_{p,q}^{s+m}(\R^n)}\norm{g}_{X_w(\R^n)}.
    \end{equation}
    An analogous argument can be repeated to reduce the study of the symbol $\sigma^1$ to a symbol as in \eqref{main_sym}, so Theorem \ref{main_thm} \ref{equ25} yields
    \begin{equation}\label{sigma1}
            \norm{T_{\sigma^1}(g,f)}_{F_{p,q}^{s,1/w}(\R^n)}\leq C\norm{g}_{B_{\infty,\infty}^{s+m}(\R^n)}\norm{f}_{F_{p,q}^{0,v}(\R^n)}.
    \end{equation}
    The proof finishes by combining the estimates \eqref{sigma0} and \eqref{sigma1}.
\end{proof}
\begin{cor}\label{cor_BS_dia}
    Let $0<p\leq\infty$, $0<q\leq\infty$, $s>\tau_{p,q}$, $m\in\R$ and  $\sigma\in BS_{1,1}^m$. There exist $C>0$ and a positive integer $N$ such that
    \begin{align*}
        \norm{T_\sigma(f,g)}_{F_{p,q}^{s,1/(1+\log_+1/t)}(\R^n)}&\leq C\norm{\sigma}_{BS_{1,1;N}^m}\\
        &\quad\times\left(\norm{f}_{F_{p,q}^{s+m}(\R^n)}\norm{g}_{\bmo(\R^n)}+\norm{f}_{\bmo(\R^n)}\norm{g}_{F_{p,q}^{s+m}(\R^n)}\right)
    \end{align*}
    for all $f,g\in\SW(\R^n)$.
\end{cor}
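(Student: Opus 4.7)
The plan is to mimic the decomposition used in the proof of Corollary \ref{cor_BS}, but to apply part \ref{equ31} of Theorem \ref{main_thm} to \emph{both} pieces of the symbol, thereby avoiding part \ref{equ25} and removing the restriction $1<q<\infty$. The starting observation is that $w(t)=1+\log_+(1/t)$ is a non-increasing admissible weight: it is monotonic, constantly equal to $1$ for $t\geq 1$, and $w(2^{-2j})/w(2^{-j})\to 2$ as $j\to\infty$; moreover by Proposition \ref{xw_prop} \ref{d} we have $X_w(\R^n)=\bmo(\R^n)$ with equivalent norms.

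First I would decompose $\sigma\in BS_{1,1}^m$ as $\sigma=\sigma^0+\sigma^1$ exactly as in \eqref{equ2}, and reduce each piece to a sum of symbols of the form \eqref{main_sym} by the Fourier-series argument that appears in the proof of Corollary \ref{cor_BS}, choosing the parameters $a',a'',b',b''$ large enough to absorb the polynomial factors in $k$ and $\ell$. This step is identical to the corresponding step in Corollary \ref{cor_BS} and I would not repeat the details.

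For the piece $\sigma^0$, applying Theorem \ref{main_thm} \ref{equ31} with the weight $w$ above together with the identification $X_w=\bmo$ yields
\[
    \norm{T_{\sigma^0}(f,g)}_{F_{p,q}^{s,1/w}(\R^n)}\lesssim \norm{\sigma}_{BS_{1,1;N}^m}\,\norm{f}_{F_{p,q}^{s+m}(\R^n)}\,\norm{g}_{\bmo(\R^n)}.
\]
For the piece $\sigma^1$, the key trick is to note that, by exchanging the frequency variables, $T_{\sigma^1}(f,g)=T_{\tilde\sigma}(g,f)$ with $\tilde\sigma(x,\xi,\eta):=\sigma^1(x,\eta,\xi)$, and $\tilde\sigma$ now has precisely the structure \eqref{main_sym} (with the roles of $f$ and $g$ interchanged, so that $g$ is the argument that is Fourier-localised in dyadic annuli). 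Applying Theorem \ref{main_thm} \ref{equ31} to $T_{\tilde\sigma}(g,f)$ with the same weight gives
\[
    \norm{T_{\sigma^1}(f,g)}_{F_{p,q}^{s,1/w}(\R^n)}\lesssim \norm{\sigma}_{BS_{1,1;N}^m}\,\norm{g}_{F_{p,q}^{s+m}(\R^n)}\,\norm{f}_{\bmo(\R^n)}.
\]
Combining these two estimates via the triangle inequality produces the required bound.

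The main obstacle is the symbol decomposition in the second step, but this is essentially identical to Corollary \ref{cor_BS} and therefore routine. Everything after that reduces to a direct application of Theorem \ref{main_thm} \ref{equ31}, and the symmetry trick used for $\sigma^1$ is precisely what allows the result to cover the full range $0<q\leq\infty$ without needing the auxiliary weight $v$ or the condition \eqref{equ36}.
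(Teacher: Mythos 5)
Your proposal is correct and follows essentially the same route as the paper's own proof: decompose $\sigma=\sigma^0+\sigma^1$ as in Corollary \ref{cor_BS}, note that $w(t)=1+\log_+(1/t)$ is a non-increasing admissible weight with $X_w(\R^n)=\bmo(\R^n)$ by Proposition \ref{xw_prop}\ref{d}, and then apply Theorem \ref{main_thm}\ref{equ31} to both pieces, using the frequency-variable swap $T_{\sigma^1}(f,g)=T_{\tilde\sigma}(g,f)$ to put $\sigma^1$ into the form \eqref{main_sym}. This is precisely what the paper does, and your explicit observation that avoiding part \ref{equ25} is what recovers the full range $0<q\leq\infty$ without the auxiliary weight $v$ is the correct reading of why this corollary has a wider $q$-range than Corollary \ref{cor_BS}.
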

\begin{proof}
    The proof is analogous to the one of Corollary \ref{cor_BS}. However, this time we shall apply Theorem \ref{main_thm} \ref{equ31} to both $T_{\sigma^0}$ and $T_{\sigma^1}$.
    
    In addition, we should notice that when $w(t)=1+\log_+(1/t)$ then $X_w(\R^n)$ becomes $\bmo(\R^n)$, as stated in Proposition \ref{xw_prop} \ref{d}. More precisely, we have that $\norm{f}_{X_w(\R^n)}\approx\norm{f}_{\bmo(\R^n)}$ and $\norm{g}_{X_w(\R^n)}\approx\norm{g}_{\bmo(\R^n)}$.
\end{proof}
Finally, we can also obtain the following endpoint Kato-Ponce inequality from Corollary \ref{cor_BS}, involving $\bmo(\R^n)$.
\begin{cor}\label{cor_kp}
    Let $0<p\leq\infty$, $1<q<\infty$ and $s>\tau_{p,q}$. Then
    \[
         \norm{J^s(fg)}_{F_{p,q}^{0,1/(1+\log_+ (1/t))}(\R^n)}\lesssim\norm{J^sf}_{F_{p,q}^0(\R^n)}\norm{g}_{\bmo(\R^n)}+\norm{f}_{F_{p,q}^0(\R^n)}\norm{J^sg}_{\bmo(\R^n)}
    \]
    for all $f,g\in\SW(\R^n)$.
\end{cor}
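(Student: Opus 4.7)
The plan is to deduce Corollary \ref{cor_kp} as a direct consequence of Corollary \ref{cor_BS} applied to the trivial symbol $\sigma\equiv 1\in BS_{1,1}^0$, together with standard lifting identities and the embedding $\bmo(\R^n)\hookrightarrow B_{\infty,\infty}^0(\R^n)$ from Remark \ref{emb_Xw_besov}. Since $T_1(f,g)=fg$ and every seminorm $\norm{1}_{BS_{1,1;N}^0}$ equals $1$, I would apply Corollary \ref{cor_BS} with $\sigma\equiv 1$, $m=0$, the non-increasing admissible weight $w(t)=1+\log_+(1/t)$ and the constant weight $v\equiv 1$ to obtain
\[
    \norm{fg}_{F_{p,q}^{s,1/w}(\R^n)}\lesssim \norm{f}_{F_{p,q}^{s}(\R^n)}\norm{g}_{\bmo(\R^n)}+\norm{f}_{F_{p,q}^{0}(\R^n)}\norm{g}_{B_{\infty,\infty}^{s}(\R^n)}.
\]
Here I use Proposition \ref{xw_prop} \ref{d} to identify $X_w(\R^n)$ with $\bmo(\R^n)$, the identification $F_{p,q}^{0,v}(\R^n)=F_{p,q}^{0}(\R^n)$ when $v\equiv 1$, and the first item of Example \ref{ex_w} to check that the pair $(w,v)$ fulfils the compatibility condition \eqref{equ36} whenever $1<q<\infty$.

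Next I would translate this estimate into the Kato-Ponce form by means of three lifting-type equivalences: $\norm{J^s(fg)}_{F_{p,q}^{0,1/w}(\R^n)}\approx\norm{fg}_{F_{p,q}^{s,1/w}(\R^n)}$, the classical $\norm{J^sf}_{F_{p,q}^{0}(\R^n)}\approx\norm{f}_{F_{p,q}^{s}(\R^n)}$, and the Besov analogue $\norm{g}_{B_{\infty,\infty}^{s}(\R^n)}\approx\norm{J^sg}_{B_{\infty,\infty}^{0}(\R^n)}$. Combining the latter with Remark \ref{emb_Xw_besov} yields $\norm{g}_{B_{\infty,\infty}^{s}(\R^n)}\lesssim\norm{J^sg}_{\bmo(\R^n)}$, and substituting these comparisons into the previous display delivers precisely the conclusion of Corollary \ref{cor_kp}.

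The main technical point will be the lifting identity $\norm{J^sh}_{F_{p,q}^{0,1/w}(\R^n)}\approx\norm{h}_{F_{p,q}^{s,1/w}(\R^n)}$ on the generalised Triebel-Lizorkin space, as this is not immediately available from the classical theory because of the admissible weight $1/w$. My plan for it is to apply the multiplier estimate of Lemma \ref{Lplq} to the family $m_\ell(\xi)=\varphi_\ell(\xi)\langle\xi\rangle^s/2^{\ell s}$, whose rescalings $m_\ell(2^\ell\cdot)$ are uniformly bounded in $L_{s'}^2(\R^n)$ for every $s'>0$ thanks to the compact support of $\varphi_\ell$ in a fixed dyadic annulus. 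This allows one to exchange $\varphi_\ell(D)J^s$ with $2^{\ell s}\varphi_\ell(D)$ up to uniformly controlled Fourier multipliers, and the scalar weight $1/w(2^{-\ell})$ then simply factors into the $L^p(\ell^q)$ quasinorm, completing the comparison.
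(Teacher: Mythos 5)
Your proposal follows essentially the same route as the paper: apply Corollary \ref{cor_BS} with $\sigma\equiv 1$, $m=0$, $w(t)=1+\log_+(1/t)$ and $v\equiv 1$ (checking \eqref{equ36} via the first item of Example~\ref{ex_w}), identify $X_w(\R^n)$ with $\bmo(\R^n)$ by Proposition~\ref{xw_prop}~\ref{d}, lift by $J^s$ on all three spaces, and finish with the embedding $\norm{J^sg}_{B_{\infty,\infty}^0(\R^n)}\lesssim\norm{J^sg}_{\bmo(\R^n)}$ from Remark~\ref{emb_Xw_besov}. The only difference is that the paper cites the lifting property for spaces of generalised smoothness from \cite{mou}*{Proposition 1.8}, while you sketch a self-contained verification via Lemma~\ref{Lplq}; your sketch is sound (the scalar factors $2^{\ell s}/w(2^{-\ell})$ indeed ride along for free in that multiplier lemma, and for $p=\infty$ one uses part \ref{casopinfty}), but it amounts to reproving a known result rather than taking a genuinely different route.
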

\begin{proof}
    Applying Corollary \ref{cor_BS} with $\sigma\equiv 1$, $m=0$, $w(t)=1+\log_+(1/t)$ and $v\equiv 1$ (see Example \ref{ex_w} above), jointly with Proposition \ref{xw_prop} \ref{d} we obtain that
     \[
        \norm{fg}_{F_{p,q}^{s,1/(1+\log_+ (1/t))}(\R^n)}\lesssim \norm{f}_{F_{p,q}^s(\R^n)}\norm{g}_{\bmo(\R^n)}+\norm{f}_{F_{p,q}^0(\R^n)}\norm{g}_{B_{\infty,\infty}^s(\R^n)}.
    \]
   By using the lifting property for Besov and Triebel-Lizorkin spaces with generalised smoothness shown in \cite{mou}*{Proposition 1.8} we see that
    \[
        \norm{J^s(fg)}_{F_{p,q}^{0,1/(1+\log_+ (1/t))}(\R^n)}\lesssim \norm{J^sf}_{F_{p,q}^0(\R^n)}\norm{g}_{\bmo(\R^n)}+\norm{f}_{F_{p,q}^0(\R^n)}\norm{J^sg}_{B_{\infty,\infty}^0(\R^n)}.
    \]
    To finish the proof we notice that Remark \ref{emb_Xw_besov} jointly with Proposition \ref{xw_prop} \ref{d} yield
    \[
        \norm{J^sg}_{B_{\infty,\infty}^0(\R^n)}\lesssim\norm{J^sg}_{\bmo(\R^n)}.
    \]
\end{proof}
\begin{rem}
    It is known (see \cite{Runst-Sickel}*{Proposition 2.1.2}) that $\norm{\cdot}_{\bmo(\R^n)}\approx\norm{\cdot}_{F_{\infty,2}^0(\R^n)}$. Then Corollary \ref{cor_kp} with $p=\infty$ and $q=2$ yields, for $s>0$,
    \[
         \norm{J^s(fg)}_{F_{\infty,2}^{0,1/(1+\log_+ (1/t))}(\R^n)}\lesssim\norm{J^sf}_{\bmo(\R^n)}\norm{g}_{\bmo(\R^n)}+\norm{f}_{\bmo(\R^n)}\norm{J^sg}_{\bmo(\R^n)}.
    \]
     This inequality complements  that obtained in \cite{rod-sta}*{Theorem 7.5}, with a weaker restriction on the regularity index.
\end{rem}
\section{Proof of Theorem \ref{main_thm}}\label{Proof_main_thm}
\subsection{Proof of part \ref{equ31}}. We follow partially the strategy used by K. Koezuka and N. Tomita to prove \cite{koe-tom}*{Theorem 1.1}.

Let us consider the collection $\lbrace\varphi_j\rbrace_{j=0}^\infty$ from Definition \ref{GSTL}. We notice that
\[
    \varphi_0(2^{-j}\xi)+\sum_{k=1}^\infty\varphi(2^{-j-k}\xi)=1
\]
for all $\xi\in\R^n$ and $j\geq 1$, so that we can decompose the functions $\m_j$ defining $\sigma$ as $\m_j=\m_{j,0}+\sum_{k=1}^\infty \m_{j,k}$ with
\[
    \m_{j,0}=\varphi_0(2^{-j}D)\m_j \quad \mathrm{and} \quad \m_{j,k}=\varphi(2^{-j-k}D)\m_j,\quad k\geq 1.
\]
Set $\Phi:=\mathcal{F}^{-1}\varphi$ and pick $N\in\N$. Using the fact that $\partial^\alpha\varphi(0)=0$ for all $\alpha\in\N^n$ jointly with Taylor's formula we deduce that
\begin{align*}
    \m_{j,k}(x)&=2^{(j+k)n}\int_{\R^n}\Phi(2^{j+k}(x-y))\left(\m_j(y)-\sum_{\abs{\alpha}<N}\frac{\partial^\alpha\m_j(x)}{\alpha !}(y-x)^\alpha\right)\dd y\\
    &=2^{(j+k)n}\int_{\R^n}\Phi(2^{j+k}(x-y))\\
    &\quad\times\left(N\sum_{\abs{\alpha}=N}\frac{(y-x)^\alpha}{\alpha !}\int_0^1(1-t)^{N-1}(\partial^\alpha \m_j)(x+t(y-x))\dd t\right)\dd y.
\end{align*}
Then we use \eqref{equ4} to obtain that
\begin{equation}\label{equ8}
    \norm{\m_{j,k}}_\infty\lesssim 2^{jm-kN}
\end{equation}
for all $j\geq 0$ and $k\geq 1$. The same inequality holds in the case $k=0$, which does not require Taylor's formula to be deduced.
    
Using the previous decomposition of $\m_j$ we can write
\[
    T_\sigma(f,g)(x)=\sum_{j=0}^\infty\sum_{k=0}^\infty \m_{j,k}(x)\left(\psi_j(D)f\right)(x)\left(\phi_j(D)g\right)(x),
\]
where the frequency support of each function in the sum satisfies
\begin{equation}\label{equ5}
    \operatorname{supp}\mathcal{F}[\m_{j,k}\left(\psi_j(D)f\right)\left(\phi_j(D)g\right)]\subseteq\lbrace\abs{\xi}\lesssim 2^{j+k}\rbrace,\quad j,k\geq 0.
\end{equation}
Next we define $\m_{j,k}:=0$ for all $j<0$, $k\geq 0$, and $\psi_j=\phi_j:=0$ for all $j<0$. Combining the support condition on $\varphi_\ell$ with \eqref{equ5} we get that
\[
    \varphi_\ell(D)T_\sigma(f,g)=\sum_{k=0}^\infty\sum_{j+k+L\geq\ell}^\infty\varphi_\ell(D)[\m_{j,k}\left(\psi_j(D)f\right)\left(\phi_j(D)g\right)],
\]
where $L$ is a fixed integer determined by the implicit constants for the sizes of the supports of $\varphi_\ell$, $\widehat{\m_{j,k}}$, $\psi_j$ and $\phi_j$. Hence a change of variables yields
\begin{equation}\label{DecomLPpiece}
    \varphi_\ell(D)T_\sigma(f,g)=\sum_{k=0}^\infty\sum_{j=0}^\infty\varphi_\ell(D)\left[\m_{j_{(k,\ell)},k}\left(\psi_{j_{(k,\ell)}}(D)f\right)\left(\phi_{j_{(k,\ell)}}(D)g\right)\right]
\end{equation}
with $j_{(k,\ell)}=j-k+\ell-L$.

\subsubsection{Case $0<p<\infty$ or $p=q=\infty$} If we define $r=\min\lbrace p,q,1\rbrace$ then it holds that
$$
    \norm{\lbrace f_j\rbrace_j+\lbrace g_j\rbrace_j}_{L^p(\ell^q)}^r\leq \norm{\lbrace f_j\rbrace_j}_{L^p(\ell^q)}^r+\norm{\lbrace g_j\rbrace_j}_{L^p(\ell^q)}^r.
$$
Using this triangular inequality jointly with \eqref{DecomLPpiece}, we see that the $r$-th power of $\norm{T_\sigma(f,g)}_{F_{p,q}^{s,1/w}(\R^n)}$ can be estimated by
\begin{equation}\label{equ15}
    \sum_{j,k=0}^\infty\left\lbrace\int_ {\R^n}\left(\sum_ {\ell=0}^\infty\abs{\frac{2^{\ell s}}{w(2^{-\ell})}\varphi_\ell(D)\left[\m_{j_{(k,\ell)},k}\psi_{j_{(k,\ell)}}(D)f\phi_{j_{(k,\ell)}}(D)g\right](x)}^q\right)^{p/q}\dd x\right\rbrace^{r/p}.
\end{equation}
Now let us pick $\tilde{s}\in\R$ such that $s>\tilde{s}-n/2>\tau_{p,q}$, and since
\begin{equation}\label{suppcond}
    \operatorname{supp}\mathcal{F}\left[\m_{j_{(k,\ell)},k}\left(\psi_{j_{(k,\ell)}}(D)f\right)\left(\phi_{j_{(k,\ell)}}(D)g\right)\right]\subseteq\lbrace\abs{\xi}\lesssim 2^{j+\ell}\rbrace
\end{equation}
for all $j\in\Z$ and $k\geq 0$, we can use Lemma \ref{Lplq}\ref{casopfinita} to obtain that
\begin{align}\label{equ12}
        &\left\lbrace\int_{\R^n}\left(\sum_{\ell=0}^\infty\abs{\frac{2^{\ell s}}{w(2^{-\ell})}\varphi_\ell(D)\left[\m_{j_{(k,\ell)},k}\psi_{j_{(k,\ell)}}(D)f\phi_{j_{(k,\ell)}}(D)g\right](x)}^q\right)^{p/q}\dd x\right\rbrace^{1/p}\nonumber\\
        &\lesssim\left(\sup_{\ell\geq 0}\norm{\varphi_\ell(2^{j+\ell}\cdot)}_{L^2_{\tilde{s}}(\R^n)}\right)\nonumber\\
        &\quad\times\left\lbrace\int_{\R^n}\left(\sum_{\ell=0}^\infty\abs{\frac{2^{(\ell-j) s}}{w(2^{-\ell+j})}\left[\m_{\ell-k-L,k}\psi_{\ell-j-L}(D)f\phi_{\ell-k-L}(D)g\right](x)}^q\right)^{p/q}\dd x\right\rbrace^{1/p}
    \end{align}
Next we use that $\operatorname{supp}\varphi_\ell(2^{j+\ell}\cdot)\subseteq\lbrace\abs{\xi}\lesssim 2^{-j}\rbrace$ and $\norm{\partial^\alpha\varphi_\ell(2^{j+\ell}\cdot)}_\infty\lesssim 2^{j\abs{\alpha}}$ to get that
\begin{equation}\label{equ13}
    \sup_{\ell\geq 0}\norm{\varphi_\ell(2^{j+\ell}\cdot)}_{L^2_{\tilde{s}}(\R^n)}\leq\sup_{\ell\geq 0}\norm{\varphi_\ell(2^{j+\ell}\cdot)}_{L^2(\R^n)}^{1-\theta}\norm{\varphi_\ell(2^{j+\ell}\cdot)}_{L^2_{[\tilde{s}]+1}(\R^n)}^\theta\lesssim 2^{j(\tilde{s}-n/2)},
\end{equation}
where $[\tilde{s}]$ denotes the integer part of $\tilde{s}$ and $\theta=(\tilde{s}+1)/\tilde{s}$.

Now let us focus on the second factor in the right hand side of \eqref{equ12}. To start with we see that Definition \ref{X_w} yields
\begin{equation}\label{equ7}
    \abs{\phi_\ell(D)g(x)}\lesssim w(2^{-\ell})\norm{g}_{X_w(\R^n)}
\end{equation}
for all $\ell\geq 0$ and $x\in\R^n$. Then we change variables and apply \eqref{equ7} jointly with \eqref{equ8} to deduce that
\begin{equation}\label{equ32}
\begin{split}
    &\left\lbrace\int_{\R^n}\left(\sum_{\ell=0}^\infty\abs{\frac{2^{(\ell-j) s}}{w(2^{-\ell+j})}\left[\m_{\ell-k-L,k}\psi_{\ell-j-L}(D)f\phi_{\ell-k-L}(D)g\right](x)}^q\right)^{p/q}\dd x\right\rbrace^{1/p}\\
    &=\left\lbrace\int_{\R^n}\left(\sum_{\ell=-k-L}^\infty\abs{\frac{2^{(-j+\ell+k+L) s}}{w(2^{j-\ell-k-L})}\left[\m_{\ell,k}\psi_{\ell}(D)f\phi_{\ell}(D)g\right](x)}^q\right)^{p/q}\dd x\right\rbrace^{1/p}\\
    &\lesssim 2^{(-j+k)s}\left(\int_ {\R^n}\left(\sum_ {\ell\geq 0}\abs{2^{\ell s}\frac{w(2^{-\ell})}{w(2^{j-k-\ell-L})}\left[\m_{\ell,k}\psi_\ell(D)f\right](x)}^q\right)^{p/q}\dd x\right)^{1/p}\norm{g}_{X_w(\R^n)}\\
    &\lesssim 2^{-js-k(N-s)}\left(\int_ {\R^n}\left(\sum_ {\ell\geq 0}\abs{2^{\ell (s+m
    )}\frac{w(2^{-\ell})}{w(2^{j-k-\ell-L})}[\psi_\ell(D)f](x)}^q\right)^{p/q}\dd x\right)^{1/p}\norm{g}_{X_w(\R^n)}.
\end{split}
\end{equation}
    
In addition, using \eqref{equ9}, the fact that $w$ is decreasing and \eqref{equ10} yields
\begin{equation}\label{equ26}
    \frac{w(2^{-\ell})}{w(2^{j-k-\ell-L})}\lesssim\frac{w(2^{-\ell})}{w(2^{j-k-\ell})}\leq\frac{w(2^{-\ell})}{w(2^{j-\ell})}\lesssim (1+\log_+2^j)^b\lesssim (1+j)^b
\end{equation}
for some $b\geq 0$, from where
\begin{align}\label{equ33}
    &2^{-js-k(N-s)}\left(\int_ {\R^n}\left(\sum_ {\ell\geq 0}\abs{2^{\ell(s+m)}\frac{w(2^{-\ell})}{w(2^{j-k-\ell-L})}[\psi_\ell(D)f](x)}^q\right)^{p/q}\dd x\right)^{1/p}\norm{g}_{X_w(\R^n)}\nonumber\\
    &\lesssim 2^{-js-k(N-s)}(1+j)^b\left(\int_ {\R^n}\left(\sum_ {\ell\geq 0}\abs{2^{\ell(s+m)}[\psi_\ell(D)f](x)}^q\right)^{p/q}\dd x\right)^{1/p}\norm{g}_{X_w(\R^n)}\\
    &\lesssim 2^{-js-k(N-s)}(1+j)^b\norm{f}_{F_{p,q}^{s+m}(\R^n)}\norm{g}_{X_w(\R^n)}\nonumber.
\end{align}
Then we conclude from \eqref{equ15}, \eqref{equ12}, \eqref{equ13}, \eqref{equ32}, and \eqref{equ33} that
\begin{align}\label{equ23}
    \Vert T_\sigma&(f,g) \Vert_{F_ {p,q}^{s,1/w}(\R^n)}\lesssim\left(\sum_{j,k=0}^\infty 2^{j(\tilde{s}-n/2)r}2^{-jsr-k(N-s)r}(1+j)^{br}\norm{f}_{F_{p,q}^{s+m}(\R^n)}^r\norm{g}_{X_w(\R^n)}^r\right)^{1/r}\nonumber\\
    &=\left(\sum_{j=0}^\infty 2^{-j(s-\tilde{s}+n/2)r}(1+j)^{br}\right)^{1/r}\left(\sum_{k=0}^\infty 2^{-k(N-s)r}\right)^{1/r}\norm{f}_{F_{p,q}^{s+m}(\R^n)}\norm{g}_{X_w(\R^n)}\nonumber\\
    &\lesssim\norm{f}_{F_{p,q}^{s+m}(\R^n)}\norm{g}_{X_w(\R^n)},
\end{align}
where the series in $j$ is finite by the ratio test since $(s-\tilde{s}+n/2)r>0$, while the series in $k$ is finite if we choose $N$ large enough.

\subsubsection{Case $p=\infty$ and $0<q<\infty$} \label{finite_case}

We start by setting $r=\min\lbrace 1,q\rbrace$ and taking a cube $Q\in\mathcal{D}$ such that $\ell(Q)\leq 1$. Decomposing $\varphi_\ell(D)T_\sigma(f,g)$ as in \eqref{DecomLPpiece} and using the triangular inequality we have that
\begin{equation}\label{eqi1}
\begin{split}
        &\left(\frac{1}{\abs{Q}}\int_Q\sum_{\ell=-\log_2 \ell(Q)}^\infty\frac{2^{s\ell q}}{w(2^{-\ell})^q}\abs{\varphi_\ell(D)T_\sigma(f,g)(x)}^q \dd x\right)^{1/q}\\
        &\leq\left(\sum_{j,k=0}^\infty\left(\frac{1}{\abs{Q}}\int_Q\sum_{\ell=-\log_2 \ell(Q)}^\infty\frac{2^{s\ell q}}{w(2^{-\ell})^q}\right.\right.\\
        &\left.\left.\quad\times\abs{\varphi_\ell(D)\left[\m_{j_{(k,\ell)},k}\psi_{j_{(k,\ell)}}(D)f\phi_{j_{(k,\ell)}}(D)g\right]}^q \dd x\right)^{ r/q}\right)^{1/r}.
\end{split}
    \end{equation}
Next we pick $\tilde{s}\in\R$ satisfying $s>\tilde{s}-n/2>\tau_{p,q}$ and due to the support condition in \eqref{suppcond} we can apply Lemma \ref{Lplq}\ref{casopinfty} to obtain that
\begin{align}\label{eqi2}
   &\left(\frac{1}{\abs{Q}}\int_Q\sum_{\ell=-\log_2 \ell(Q)}^\infty\frac{2^{s\ell q}}{w(2^{-\ell})^q}\abs{\varphi_\ell(D)\left[\m_{j_{(k,\ell)},k}\psi_{j_{(k,\ell)}}(D)f\phi_{j_{(k,\ell)}}(D)g\right]}^q \dd x\right)^{ 1/q}\nonumber\\
   &\quad\lesssim\left(\sup_{\ell\geq 1}\norm{\varphi_\ell(2^{\ell+j}\cdot)}_{L^2_{\tilde{s}}(\R^n)}\right)\nonumber\\
   &\quad\quad\times\sup_{\substack{Q\in\mathcal{D} \\ \ell(Q)\leq 1}}\left(\frac{1}{\abs{Q}}\int_Q\sum_{\ell=-\log_2 \ell(Q)}^\infty\frac{2^{s\ell q}}{w(2^{-\ell})^q}\abs{\m_{j_{(k,\ell)},k}\psi_{j_{(k,\ell)}}(D)f\phi_{j_{(k,\ell)}}(D)g}^q \dd x\right)^{1/q}.
\end{align}
Next we change variables and apply the inequalities in \eqref{equ7}, \eqref{equ8} and \eqref{equ26} to deduce that
\begin{align}\label{eqi3}
    &\left(\frac{1}{\abs{Q}}\int_Q\sum_{\ell=-\log_2 \ell(Q)}^\infty\frac{2^{s\ell q}}{w(2^{-\ell})^q}\abs{\m_{j_{(k,\ell)},k}\psi_{j_{(k,\ell)}}(D)f\phi_{j_{(k,\ell)}}(D)g}^q \dd x\right)^{1/q}\nonumber\\
    &=\left(\frac{1}{\abs{Q}}\int_Q\sum_{\ell=j-k-\log_2 \ell(Q)-L}^\infty\frac{2^{s(\ell-j+k+L) q}}{w(2^{-\ell-k-L+j})^q}\abs{\m_{\ell,k}\psi_{\ell}(D)f\phi_{\ell}(D)g}^q \dd x\right)^{1/q}\nonumber\\
    &\lesssim 2^{(-j+k)s}\left(\frac{1}{\abs{Q}}\int_Q\sum_{\ell=j-k-\log_2 \ell(Q)-L}^\infty\frac{2^{\ell sq}w(2^{-\ell})^q}{w(2^{-\ell-k-L+j})^q}\abs{\m_{\ell,k}\psi_{\ell}(D)f}^q \dd x\right)^{1/q}\norm{g}_{X_w(\R^n)}\nonumber\\
    &\lesssim 2^{-js-k(N-s)}(1+j)^b\left(\frac{1}{\abs{Q}}\int_Q\sum_{\ell=(j-k-L-\log_2 \ell(Q))_+}^\infty2^{\ell( s+m)q}\abs{\psi_{\ell}(D)f}^q \dd x\right)^{1/q}\norm{g}_{X_w(\R^n)},
\end{align}
where $t_+:=\max\{t,0\}$. 

Now we shall estimate the term
\begin{equation}\label{serrahima}
    \left(\frac{1}{\abs{Q}}\int_Q\sum_{\ell=(j-k-L-\log_2 \ell(Q))_+}^\infty2^{\ell( s+m)q}\abs{\psi_{\ell}(D)f}^q \dd x\right)^{1/q}.
\end{equation}
At this point we should make a distinction, and treat the following three cases separately:
\begin{enumerate}[label=\alph*)]
    \item\label{case1indices} $j-k-L\geq 0$,
    \item\label{case2indices} $\log_2\ell(Q)\leq j-k-L<0$,
    \item\label{case3indices} $j-k-L<\log_2\ell(Q)$.
\end{enumerate}
For the case \ref{case1indices} we obtain that
\begin{equation}\label{eqi4}
    \begin{split}
    &\left(\frac{1}{\abs{Q}}\int_Q\sum_{\ell=j-k-L-\log_2 \ell(Q)}^\infty 2^{\ell( s+m)q}\abs{\psi_{\ell}(D)f}^q \dd x\right)^{1/q}\\
    &\quad\leq \left(\frac{1}{\abs{Q}}\int_Q\sum_{\ell=-\log_2 \ell(Q)}^\infty 2^{\ell( s+m)q}\abs{\psi_{\ell}(D)f}^q \dd x\right)^{1/q}\lesssim \norm{f}_{F_{\infty,q}^{s+m}}.
    \end{split}
\end{equation}
Next we focus on case \ref{case2indices}. Let us pick a second cube $\tilde{Q}\in\mathcal{D}$ which contains $Q$ and such that $\ell(\tilde{Q})=2^{-j+k+L}\ell(Q)$. We notice that $\ell(\tilde{Q})\leq 1$ under the current assumptions. Hence we obtain that
\[
    \sum_{\ell=j-k-L-\log_2 \ell(Q)}^\infty 2^{\ell( s+m)q}\abs{\psi_{\ell}(D)f}^q= \sum_{\ell=-\log_2\ell(\tilde{Q})}^\infty 2^{\ell( s+m)q}\abs{\psi_{\ell}(D)f}^q.
\]
Averaging over $Q$ yields
\begin{align*}
     &\frac{1}{\abs{Q}}\int_Q \sum_{\ell=j-k-L-\log_2 \ell(Q)}^\infty 2^{\ell( s+m)q}\abs{\psi_{\ell}(D)f}^q\\ &\quad\leq\frac{2^{(-j+k+L)n}}{\vert\tilde{Q}\vert}\int_{\tilde{Q}}\sum_{-\log_2\ell(\tilde{Q})}^\infty 2^{\ell( s+m)q}\abs{\psi_{\ell}(D)f}^q,
\end{align*}
from where we deduce that
\begin{equation}\label{eqi6}
    \begin{split}
        &\left(\frac{1}{\abs{Q}}\int_Q\sum_{\ell=j-k-L-\log_2 \ell(Q)}^\infty 2^{\ell( s+m)q}\abs{\psi_{\ell}(D)f}^q \dd x\right)^{1/q}\\
        &\leq 2^{(-j+k+L)n/q}\left(\frac{1}{\vert\tilde{Q}\vert}\int_{\tilde{Q}}\sum_{-\log_2\ell(\tilde{Q})}^\infty 2^{\ell( s+m)q}\abs{\psi_{\ell}(D)f}^q \dd x\right)^{1/q}\\ &\lesssim 2^{(-j+k+L)n/q}\norm{f}_{F_{\infty,q}^{s+m}}.
    \end{split}
\end{equation}
Finally, let us study case \ref{case3indices}. This time we will pick a cube $\tilde{Q}\in\mathcal{D}$ which contains $Q$ and such that $\ell(\tilde{Q})=1$. Since $\ell(Q)>2^{j-k-L}$ it follows that 
\begin{equation}\label{eqi9}
    \begin{split}
    &\left(\frac{1}{\abs{Q}}\int_Q\sum_{\ell=0}^\infty 2^{\ell(  s+m)q}\abs{\psi_{\ell}(D)f}^q \dd x\right)^{1/q}\\
    &=\left(\frac{1}{\abs{Q}}\int_Q\sum_{\ell=-\log_2 \ell(\tilde{Q})}^\infty 2^{\ell( s+m)q}\abs{\psi_{\ell}(D)f}^q \dd x\right)^{1/q}\\
    &\leq 2^{(-j+k+L)n/q}\left(\int_{\tilde{Q}}\sum_{\ell=-\log_2 \ell(\tilde{Q})}^\infty2^{\ell( s+m)q}\abs{\psi_{\ell}(D)f}^q \dd x\right)^{1/q}\\
    &\lesssim 2^{(-j+k+L)n/q}\norm{f}_{F_{\infty,q}^{s+m}}.
    \end{split}
\end{equation}
Therefore, putting together the inequalities \eqref{eqi1}, \eqref{eqi2}, \eqref{equ13}, \eqref{eqi3}, \eqref{eqi4}, \eqref{eqi6} and \eqref{eqi9} we obtain that
\begin{align*}
    &\sup_{\substack{Q\in\mathcal{D} \\ \ell(Q)\leq 1}}\left(\frac{1}{\abs{Q}}\int_Q\sum_{\ell=-\log_2 \ell(Q)}^\infty\frac{2^{s\ell q}}{w(2^{-\ell})^q}\abs{\varphi_\ell(D)T_\sigma(f,g)(x)}^q \dd x\right)^{1/q}\\
    &\quad\lesssim c(L,r,s,N)\norm{f}_{F_{\infty,q}^{s+m}(\R^n)}\norm{g}_{X_w(\R^n)}\\
\end{align*}
where $c(L,r,s,N)^r$ is given by
\begin{equation}\label{tort}
    \sum_{j-k< L} 2^{-j(s-\tilde{s}+n/2+n/q)r}(1+j)^{br}2^{-k(N-s)r}+\sum_{j-k\geq L} 2^{-j(s-\tilde{s}+n/2)r}(1+j)^{br}2^{-k(N-s)r},
\end{equation}
which converges by taking $N$ large enough.

We have seen so far that the inequality
\begin{align}\label{eqi7}
    &\sup_{\substack{Q\in\mathcal{D} \\ \ell(Q)\leq 1}}\left(\frac{1}{\abs{Q}}\int_Q\sum_{\ell=-\log_2 \ell(Q)}^\infty\frac{2^{s\ell q}}{w(2^{-\ell})^q}\abs{\varphi_\ell(D)T_\sigma(f,g)(x)}^q \dd x\right)^{1/q}\nonumber\\
    &\quad\lesssim\norm{f}_{F_{\infty,q}^{s+m}(\R^n)}\norm{g}_{X_w(\R^n)}
\end{align}
is satisfied. On the other hand, we can apply Theorem \ref{main_thm} for the case $p=q=\infty$ jointly with the embedding $F_{\infty,q}^{s+m}\hookrightarrow F_{\infty,\infty}^{s+m}$ (see \cite{Runst-Sickel}*{Remark 2.2.3/3}) to get that
\begin{equation}\label{eqi8}
    \norm{\varphi_0(D)T_\sigma(f,g)}_\infty\leq\norm{T_\sigma(f,g)}_{F_{\infty,\infty}^{s,1/w}(\R^n)}\lesssim\norm{f}_{F_{\infty,q}^{s+m}(\R^n)}\norm{g}_{X_w(\R^n)}.
\end{equation}
Then the statement follows by combining \eqref{eqi7} and \eqref{eqi8}.

\subsection{Proof of part \ref{equ25}} As in the previous part, we will study separately the cases where $p$ is finite and where $p=\infty$. 

\subsubsection{Case $0<p<\infty$} We repeat the argument used to obtain equations \eqref{equ15}, \eqref{equ12} and \eqref{equ13} so that the $r$-th power of $\norm{T_\sigma (f,g)}_{F_{p,q}^{s,1/w}(\R^n)}$ is controlled by
\begin{equation}\label{aa}
    \sum_{j,k=0}^\infty2^{rj(\tilde{s}-n/2)}\left\lbrace\int_ {\R^n}\left(\sum_ {\ell=0}^\infty\abs{\frac{2^{\ell s}}{w(2^{-\ell})}\m_{j_{(k,\ell)},k}\psi_{j_{(k,\ell)}}(D)f\phi_{j_{(k,\ell)}}(D)g}^q\right)^{p/q}\dd x\right\rbrace^{r/p},
\end{equation}
where now $r=\min\lbrace p,1\rbrace$.

Let $N\in\N$. We change variables, apply \eqref{equ8}, the definition of $\norm{\cdot}_{B_{\infty,\infty}^{s+m}(\R^n)}$ and \eqref{equ26} to obtain that
\begin{align}\label{equ37}
    &\left\lbrace\int_ {\R^n}\left(\sum_ {\ell=0}^\infty\abs{\frac{2^{\ell s}}{w(2^{-\ell})}\m_{j_{(k,\ell)},k}\psi_{j_{(k,\ell)}}(D)f\phi_{j_{(k,\ell)}}(D)g}^q\right)^{p/q}\dd x\right\rbrace^{1/p}\nonumber\\
    &=\left\lbrace\int_ {\R^n}\left(\sum_ {\ell=j-k-L}^\infty\abs{\frac{2^{(-j+k+\ell+L)s}}{w(2^{j-k-\ell-L})}\m_{\ell,k}\psi_\ell(D)f\phi_\ell(D)g}^q\right)^{p/q}\dd x\right\rbrace^{1/p}\nonumber\\
    &\lesssim 2^{-js+k(s-N)}\left\lbrace\int_ {\R^n}\left(\sum_ {\ell=0}^\infty\abs{\frac{2^{\ell( s+m)}}{w(2^{j-k-\ell-L})}\psi_\ell(D)f\phi_\ell(D)g}^q\right)^{p/q}\dd x\right\rbrace^{1/p}\nonumber\\
    &\leq 2^{-js+k(s-N)}\left\lbrace\int_ {\R^n}\left(\sum_ {\ell=0}^\infty\abs{\frac{1}{w(2^{j-k-\ell-L})}\phi_\ell(D)g}^q\right)^{p/q}\dd x\right\rbrace^{1/p}\norm{f}_{B_{\infty,\infty}^{s+m}(\R^n)}\nonumber\\
    &\lesssim 2^{-js+k(s-N)}(1+j)^b\left\lbrace\int_ {\R^n}\left(\sum_ {\ell=0}^\infty\abs{\frac{1}{w(2^{-\ell})}\phi_\ell(D)g}^q\right)^{p/q}\dd x\right\rbrace^{1/p}\norm{f}_{B_{\infty,\infty}^{s+m}(\R^n)}.
\end{align}
Now we can use \eqref{equ35}, Proposition \ref{wei-har} and \eqref{equ36} to get that
\[
    \left(\sum_ {\ell=0}^\infty\abs{\frac{1}{w(2^{-\ell})}\phi_\ell(D)g}^q\right)^{1/q}\lesssim\left(\sum_{\ell=0}^\infty \abs{v(2^{-\ell})\tilde{\psi}_\ell(D)g}^q\right)^{1/q},
\]
from where
\begin{align}\label{equ38}
    \left\lbrace\int_ {\R^n}\left(\sum_ {\ell=0}^\infty\abs{\frac{1}{w(2^{-\ell})}\phi_\ell(D)g}^q\right)^{p/q}\dd x\right\rbrace^{1/p}&\lesssim\left\lbrace\int_ {\R^n}\left(\sum_ {\ell=0}^\infty\abs{v(2^{-\ell})\tilde{\psi}_\ell(D)g}^q\right)^{p/q}\dd x\right\rbrace^{1/p}\nonumber\\
    &\lesssim\norm{g}_{F_{p,q}^{0,v}(\R^n)}.
\end{align}
Combining \eqref{aa}, \eqref{equ37} and \eqref{equ38} we obtain that
\begin{align*}\label{equ23}
    \Vert T_\sigma&(f,g) \Vert_{F_ {p,q}^{s,1/w}(\R^n)}\lesssim\left(\sum_{j,k=0}^\infty 2^{j(\tilde{s}-n/2-s)r}(1+j)^{br}2^{-k(N-s)r}\norm{f}_{B_{\infty,\infty}^{s+m}(\R^n)}^r\norm{g}_{F_{p,q}^{0,v}(\R^n)}^r\right)^{1/r}\nonumber\\
    &=\left(\sum_{j=0}^\infty 2^{-j(s-\tilde{s}+n/2)r}(1+j)^{br}\right)^{1/r}\left(\sum_{k=0}^\infty 2^{-k(N-s)r}\right)^{1/r}\norm{f}_{B_{\infty,\infty}^{s+m}(\R^n)}\norm{g}_{F_{p,q}^{0,v}(\R^n)}\nonumber\\
    &\lesssim\norm{f}_{B_{\infty,\infty}^{s+m}(\R^n)}\norm{g}_{F_{p,q}^{0,v}(\R^n)},
\end{align*}
where the series are finite choosing $N$ large enough. 
\subsubsection{Case $p=\infty$}  Proceeding as in the case \ref{finite_case}, we obtain \eqref{eqi1} and \eqref{eqi2}. A change variables, \eqref{equ8}, the definition of $\norm{\cdot}_{B_{\infty,\infty}^{s+m}(\R^n)}$ and \eqref{equ26} yield
\begin{align*}
    &\left(\frac{1}{\abs{Q}}\int_Q\sum_{\ell=-\log_2 \ell(Q)}^\infty\frac{2^{s\ell q}}{w(2^{-\ell})^q}\abs{\m_{j_{(k,\ell)},k}\psi_{j_{(k,\ell)}}(D)f\phi_{j_{(k,\ell)}}(D)g}^q \dd x\right)^{1/q}\nonumber\\
    &=\left(\frac{1}{\abs{Q}}\int_Q\sum_{\ell=j-k-\log_2 \ell(Q)-L}^\infty\frac{2^{s(\ell-j+k+L) q}}{w(2^{-\ell-k-L+j})^q}\abs{\m_{\ell,k}\psi_{\ell}(D)f\phi_{\ell}(D)g}^q \dd x\right)^{1/q}\nonumber\\
    &\lesssim 2^{-js-k(N-s)}\left(\frac{1}{\abs{Q}}\int_Q\sum_{\ell=(j-k-\log_2 \ell(Q)-L)_+}^\infty\frac{1}{w(2^{-\ell-k-L+j})^q}\abs{\phi_{\ell}(D)g}^q \dd x\right)^{1/q}\norm{f}_{B_{\infty,\infty}^{s+m}(\R^n)}\nonumber\\
    &\lesssim 2^{-js-k(N-s)}(1+j)^b\left(\frac{1}{\abs{Q}}\int_Q\sum_{\ell=(j-k-L-\log_2 \ell(Q))_+}^\infty \frac{1}{w(2^{-\ell})^q}\abs{\phi_{\ell}(D)g}^q \dd x\right)^{1/q}\norm{f}_{B_{\infty,\infty}^{s+m}(\R^n)}.
\end{align*}
By using Proposition \ref{wei-har}, \eqref{equ35} and the monotonicity of the integral,  the third term above, within brakets, is bounded by 
\[
    \left(\frac{1}{\abs{Q}}\int_Q\sum_{\ell=(j-k-L-\log_2 \ell(Q))_+}^\infty {v(2^{-\ell})^q}\abs{\tilde{\psi}_{\ell}(D)g}^q \dd x\right)^{1/q}.
\]
Following the argument for studying \eqref{serrahima}, with minor changes, one obtains that this term is bounded by 
\begin{equation*}
    2^{n(-j+k+L)_+/q} \norm{g}_{F_{\infty,q}^{0,v}}.
\end{equation*}
Combining this with the estimates above, we obtain 
\begin{equation}\label{eqi10}
    \begin{split}
        &\sup_{\substack{Q\in\mathcal{D} \\ \ell(Q)\leq 1}}\left(\frac{1}{\abs{Q}}\int_Q\sum_{\ell=-\log_2 \ell(Q)}^\infty\frac{2^{s\ell q}}{w(2^{-\ell})^q}\abs{\varphi_\ell(D)T_\sigma(f,g)(x)}^q \dd x\right)^{1/q}\\
        &\quad\lesssim c(L,r,s,N)\norm{f}_{B_{\infty,\infty}^{s+m}(\R^n)}\norm{g}_{F_{\infty,q}^{0,v}}
    \end{split}    
\end{equation}
where $c(L,r,s,N)^r$ as in \eqref{tort}.

Now it is left to check that 
\begin{equation}\label{eqi11}
    \norm{\varphi_0(D)T_\sigma(f,g)}_\infty\lesssim\norm{f}_{B_{\infty,\infty}^{s+m}(\R^n)}\norm{g}_{F_{\infty,q}^{0,v}},
\end{equation}
which combined with \eqref{eqi10} would give the statement in \ref{equ25}.

To this end, we write $\sigma$ as in \eqref{main_sym} and we see that
\[
    \varphi_0(D)T_\sigma(f,g)(x)=\sum_{j=0}^\infty\varphi_0(D)[\m_j\psi_j(D)f\phi_j(D)g](x).
\]
Let us take $\tilde{s}\in\R$ such that $s>\tilde{s}-n/2>\tau_{p,q}$ and apply Lemma \ref{Lplq}\ref{casopfinita} to get that
\begin{equation}\label{eqi12}
    \norm{\varphi_0(D)T_\sigma(f,g)}_\infty\lesssim\sum_{j=0}^\infty\norm{\varphi_0(2^j\cdot)}_{L^2_{\tilde{s}}(\R^n)}\norm{\m_j\psi_j(D)f\phi_j(D)g}_\infty.
\end{equation}
Next we use \eqref{equ13}, \eqref{equ4} and the definition of $\norm{\cdot}_{B_{\infty,\infty}^{s+m}}$ to obtain that
\begin{equation}
    \begin{split}
        &\sum_{j=0}^\infty\norm{\varphi_0(2^j\cdot)}_{L^2_{\tilde{s}}(\R^n)}\norm{\m_j\psi_j(D)f\phi_j(D)g}_\infty\\
        &\quad\lesssim\sum_{j=0}^\infty 2^{j(\tilde{s}-n/2)}2^{jm}\norm{\psi_j(D)f}_\infty\norm{\phi_j(D)g}_\infty\\
        &\quad=\left(\sum_{j=0}^\infty 2^{j(\tilde{s}-n/2)}2^{-js}\norm{\phi_j(D)g}_\infty\right)\norm{f}_{B_{\infty,\infty}^{s+m}}.
    \end{split}    
\end{equation}
Now we use \eqref{equ35} to get that
\begin{equation}
    \norm{\phi_j(D)g}_\infty\leq\sum_{k=0}^j\norm{\tilde{\psi}_k(D)g}_\infty\leq\norm{g}_{F_{\infty,q}^{0,v}}+\sum_{k=1}^j\norm{\tilde{\psi}_k(D)g}_\infty.
\end{equation}
Let $\lbrace Q_i\rbrace_{i\in\N}\subseteq\mathcal{D}$ be a dyadic decomposition of $\R^n$ such that $\ell(Q_i)=1$ for all $i\in\N$. Let $\tilde{\phi}$ be a Schwartz function and set $\tilde{\phi}_k:=\tilde{\phi}(2^{-k}\cdot)$, in such a way that $\tilde{\phi}_k$ is identically 1 in the support of $\tilde{\psi}_k$. For $1\leq k\leq j$, we use Young's inequality, the definition of $\norm{\cdot}_{F_{\infty,q}^{0}}$ and the fact that $\inf_{t>0} v(t)>0$ to obtain that
\begin{equation}
    \begin{split}
        \norm{\tilde{\psi}_k(D)g}_\infty&=\sup_{i\in\N}\norm{\tilde{\psi}_k(D)g}_{L^\infty(Q_i)}\\
        &=\sup_{i\in\N}\norm{\tilde{\phi}_k(D)[\tilde{\psi}_k(D)g]}_{L^\infty(Q_i)}\\
        &\leq\norm{\mathcal{F}^{-1}(\tilde{\phi}_k)}_{L^{q'}(\R^n)}\sup_{i\in\N}\norm{\tilde{\psi}_k(D)g}_{L^q(Q_i)}\\
        &\lesssim \norm{\mathcal{F}^{-1}(\tilde{\phi}_k)}_{L^{q'}(\R^n)}\norm{g}_{F_{\infty,q}^{0}(\R^n)}\\
        &\lesssim \norm{\mathcal{F}^{-1}(\tilde{\phi}_k)}_{L^{q'}(\R^n)}\norm{g}_{F_{\infty,q}^{0,v}(\R^n)}.
    \end{split}
\end{equation}
In addition, we notice that
\begin{equation}\label{eqi13}
    \norm{\mathcal{F}^{-1}(\tilde{\phi}_k)}_{L^{q'}(\R^n)}=2^{-kn/q'}\norm{\mathcal{F}^{-1}(\tilde\phi)}_{L^{q'}(\R^n)}\approx 2^{-kn/q'}.
\end{equation}
Combining equations \eqref{eqi12}-\eqref{eqi13} yields
\begin{align*}
    \norm{\varphi_0(D)T_\sigma(f,g)}_\infty&\lesssim\left(\sum_{j=0}^\infty 2^{-j(s-\tilde{s}+n/2)}\left(1+\sum_{k=1}^j 2^{-kn/q'}\right)\right)\norm{f}_{B_{\infty,\infty}^{s+m}}\norm{g}_{F_{\infty,q}^{0,v}(\R^n)}\\
    &\lesssim\left(\sum_{j=0}^\infty 2^{-j(s-\tilde{s}+n/2)}\right)\norm{f}_{B_{\infty,\infty}^{s+m}}\norm{g}_{F_{\infty,q}^{0,v}(\R^n)}\\
    &\lesssim\norm{f}_{B_{\infty,\infty}^{s+m}}\norm{g}_{F_{\infty,q}^{0,v}(\R^n)},
\end{align*}
and hence \eqref{eqi11} is satisfied. This completes the proof of the theorem.
\begin{rem}
    In part Theorem \ref{main_thm} \ref{equ31} we could replace $\norm{g}_{X_w(\R^n)}$ by the norm of $g$ in a Besov space of generalised smoothness, $\norm{g}_{B_{\infty,\infty}^{0,u}(\R^n)}$, when $u$ is an admissible weight such that
    \begin{equation}\label{wu}
        w(t)=1+\int_{\min\lbrace t,1\rbrace}^1 \frac{\dd s}{su(s)},\quad t>0.
    \end{equation}
    Indeed, the estimate in \eqref{equ7} could be replaced by
    \[
        \norm{\phi(2^{-\ell}D)g}_\infty\leq \brkt{1+\int_{2^{-\ell}}^1 \frac{\dd s}{su(s)}} \norm{g}_{B_{\infty,\infty}^{0,u}(\R^n)}=w(2^{-\ell})\norm{g}_{B_{\infty,\infty}^{0,u}(\R^n)}.
    \]
    We notice also that for $w$ as in \eqref{wu} it holds that
    \[
        \BMO(\R^n)\cap B_{\infty,\infty}^{0,u}(\R^n)\subset X_w(\R^n).
    \]
\end{rem}

\section*{Acknowledgements}
We thank the anonymous referees for their constructive comments and suggestions that have help improving he final version of this paper.   
\begin{bibdiv}
\begin{biblist}
\bib{Paper1}{article}{
   author={Arias, Sergi},
   author={Rodr\'{\i}guez-L\'{o}pez, Salvador},
   title={Some endpoint estimates for bilinear Coifman-Meyer multipliers},
   journal={J. Math. Anal. Appl.},
   volume={498},
   date={2021},
   number={2},
   pages={124972, 27},
}
\bib{Bennet}{article}{
   author={Bennett, Grahame},
   title={Some elementary inequalities. III},
   journal={Quart. J. Math. Oxford Ser. (2)},
   volume={42},
   date={1991},
   number={166},
   pages={149--174},
}
\bib{ben-nah-tor}{article}{
   author={B\'{e}nyi, \'{A}rp\'{a}d},
   author={Nahmod, Andrea R.},
   author={Torres, Rodolfo H.},
   title={Sobolev space estimates and symbolic calculus for bilinear
   pseudodifferential operators},
   journal={J. Geom. Anal.},
   volume={16},
   date={2006},
   number={3},
   pages={431--453},
}
\bib{ben-tor}{article}{
   author={B\'{e}nyi, \'{A}rp\'{a}d},
   author={Torres, Rodolfo H.},
   title={Symbolic calculus and the transposes of bilinear
   pseudodifferential operators},
   journal={Comm. Partial Differential Equations},
   volume={28},
   date={2003},
   number={5-6},
   pages={1161--1181},
}
\bib{Bru-Nai1}{article}{
   author={Brummer, Joshua},
   author={Naibo, Virginia},
   title={Bilinear operators with homogeneous symbols, smooth molecules, and
   Kato-Ponce inequalities},
   journal={Proc. Amer. Math. Soc.},
   volume={146},
   date={2018},
   number={3},
   pages={1217--1230},
   issn={0002-9939},
}
\bib{Bru-Nai2}{article}{
   author={Brummer, Joshua},
   author={Naibo, Virginia},
   title={Weighted fractional Leibniz-type rules for bilinear multiplier
   operators},
   journal={Potential Anal.},
   volume={51},
   date={2019},
   number={1},
   pages={71--99},
   issn={0926-2601},
}
\bib{cae-mou}{article}{
   author={Caetano, Ant\'{o}nio M.},
   author={Moura, Susana D.},
   title={Local growth envelopes of spaces of generalised smoothness: the
   subcritical case},
   journal={Math. Nachr.},
   volume={273},
   date={2004},
   pages={43--57},
}
\bib{dominguez2018function}{article}{
   author={Dom{\'\i}nguez, Oscar},
   author={Tikhonov, Sergey},
   title={Function spaces of logarithmic smoothness: embeddings and characterizations},
  journal={arXiv preprint arXiv:1811.06399},
  year={2018}
}

\bib{Grafakos-Oh}{article}{
   author={Grafakos, Loukas},
   author={Oh, Seungly},
   title={The Kato-Ponce inequality},
   journal={Comm. Partial Differential Equations},
   volume={39},
   date={2014},
   number={6},
   pages={1128--1157},
   issn={0360-5302},
}
\bib{Gra-Mal_Nai}{article}{
   author={Grafakos, Loukas},
   author={Maldonado, Diego},
   author={Naibo, Virginia},
   title={A remark on an endpoint Kato-Ponce inequality},
   journal={Differential Integral Equations},
   volume={27},
   date={2014},
   number={5-6},
   pages={415--424},
   issn={0893-4983},
}
\bib{Goldberg}{article}{
   author={Goldberg, David},
   title={A local version of real Hardy spaces},
   journal={Duke Math. J.},
   volume={46},
   date={1979},
   number={1},
   pages={27--42},
}
\bib{koe-tom}{article}{
   author={Koezuka, Kimitaka},
   author={Tomita, Naohito},
   title={Bilinear pseudodifferential operators with symbols in
   $BS^m_{1,1}$ on Triebel-Lizorkin spaces},
   journal={J. Fourier Anal. Appl.},
   volume={24},
   date={2018},
   number={1},
   pages={309--319},
}
\bib{KuMaPe}{book}{
   author={Kufner, Alois},
   author={Maligranda, Lech},
   author={Persson, Lars-Erik},
   title={The Hardy inequality},
   note={About its history and some related results},
   publisher={Vydavatelsk\'{y} Servis, Plze\v{n}},
   date={2007},
   pages={162},
}
\bib{mou}{article}{
   author={Moura, Susana},
   title={Function spaces of generalised smoothness},
   journal={Dissertationes Math. (Rozprawy Mat.)},
   volume={398},
   date={2001},
   pages={88},
}
\bib{Naibo}{article}{
   author={Naibo, Virginia},
   title={On the bilinear H\"{o}rmander classes in the scales of
   Triebel-Lizorkin and Besov spaces},
   journal={J. Fourier Anal. Appl.},
   volume={21},
   date={2015},
   number={5},
   pages={1077--1104},
   issn={1069-5869},
}
\bib{Naibo-Thomson}{article}{
   author={Naibo, Virginia},
   author={Thomson, Alexander},
   title={Coifman-Meyer multipliers: Leibniz-type rules and applications to
   scattering of solutions to PDEs},
   journal={Trans. Amer. Math. Soc.},
   volume={372},
   date={2019},
   number={8},
   pages={5453--5481},
   issn={0002-9947},
}
\bib{Park2}{article}{
   author={Park, Bae Jun},
   title={Fourier multipliers on a vector-valued function space},
   journal={Constr. Approx.},
   volume={55},
   date={2022},
   number={2},
   pages={705--741},
   issn={0176-4276},
}
\bib{Park}{article}{
   author={Park, Bae Jun},
   title={Equivalence of (quasi-)norms on a vector-valued function space and
   its applications to multilinear operators},
   journal={Indiana Univ. Math. J.},
   volume={70},
   date={2021},
   number={5},
   pages={1677--1716},
   issn={0022-2518},
}
\bib{rod-sta}{article}{
   author={Rodr\'{\i}guez-L\'{o}pez, Salvador},
   author={Staubach, Wolfgang},
   title={Some endpoint estimates for bilinear paraproducts and
   applications},
   journal={J. Math. Anal. Appl.},
   volume={421},
   date={2015},
   number={2},
   pages={1021--1041},
}
\bib{Runst-Sickel}{book}{
   author={Runst, Thomas},
   author={Sickel, Winfried},
   title={Sobolev spaces of fractional order, Nemytskij operators, and
   nonlinear partial differential equations},
   series={De Gruyter Series in Nonlinear Analysis and Applications},
   volume={3},
   publisher={Walter de Gruyter \& Co., Berlin},
   date={1996},
   pages={x+547},
   isbn={3-11-015113-8},
}
\bib{Stein}{book}{
   author={Stein, Elias M.},
   title={Harmonic analysis: real-variable methods, orthogonality, and
   oscillatory integrals},
   series={Princeton Mathematical Series},
   volume={43},
   note={With the assistance of Timothy S. Murphy;
   Monographs in Harmonic Analysis, III},
   publisher={Princeton University Press, Princeton, NJ},
   date={1993},
   pages={xiv+695},
}
\bib{Trie83}{book}{
   author={Triebel, Hans},
   title={Theory of function spaces},
   series={Monographs in Mathematics},
   volume={78},
   publisher={Birkh\"{a}user Verlag, Basel},
   date={1983},
   pages={284},
}
\end{biblist}
\end{bibdiv}

\end{document}